\title{\LARGE \bf
Change Detection with the Kernel Cumulative Sum  Algorithm
}
\author{Thomas Flynn and Shinjae Yoo
  \thanks{This project was supported by the
    DOE Office of Science under grant ASCR KJ040201.}%
  \thanks{T. Flynn {(\tt\small tflynn@bnl.gov)} and S. Yoo {(\tt\small sjyoo@bnl.gov)} are with Computational Science Initiative,
  Brookhaven National Laboratory, Upton, NY 11973, United States.}%
}
\newtheorem{thm}{Theorem}
\numberwithin{thm}{section}
\newtheorem{cor}[thm]{Corollary}
\newtheorem{prop}[thm]{Proposition}
\newtheorem{lem}[thm]{Lemma}
\newtheorem{example}[thm]{Example}
\newcommand{\reals}{\mathbb{R}}
\newcommand{\setin}{\subseteq}
\newcommand{\mc}{\mathcal}
\newcommand{\esssup}{\operatorname{esssup}}
\newcommand{\citep}{\cite}
\begin{document}

\maketitle
\begin{abstract}
  Online change detection involves monitoring a stream of  data for  changes in the statistical properties of incoming observations. A good change detector will detect any changes shortly after they occur, while raising few false alarms. Although there are algorithms with confirmed optimality properties for this task, they rely on the exact specifications of the relevant probability distributions and this limits their practicality. In this work we describe a kernel-based variant of the Cumulative Sum (CUSUM) change detection algorithm that can detect changes under less restrictive assumptions.
  Instead of using the likelihood ratio, which is a parametric quantity, the Kernel CUSUM (KCUSUM) algorithm compares incoming data with samples from a reference distribution using a statistic based on the Maximum Mean Discrepancy (MMD) non-parametric testing framework.
  The KCUSUM algorithm is applicable in settings where there is a large amount of background data available and it is desirable to detect a change away from this background setting. Exploiting the random-walk structure of the test statistic, we derive bounds on the performance of the algorithm,
including the expected delay and the average time to false alarm. 
\end{abstract}


\section{Introduction}
In this work we are interested in the problem of detecting abrupt changes in streams of data. This could mean detecting a change in the average value of the observations, or a change in variance, or, more generally, finding a change in any other distributional property. In particular we are interested in online change detection, where the algorithm should figure out a change has occurred soon after it happens, without waiting for the entire data stream to be observed. Some examples of where this is relevant include intrusion detection, industrial quality control, and others.

In cases where sufficient  prior knowledge of the change is available, there are known optimal algorithms for online change detection. If the probability distributions before and after the change are known, then the CUSUM procedure (shown in Algorithm \ref{algo:cusum}) is known to be optimal for an objective function that takes into account the magnitude of delays and the frequency of false alarms \citep{lorden1971,moustakides1986optimal}.
Aside from optimality, the CUSUM is also simple to program and has an intuitive interpretation in terms of maximum likelihood. However, there are many situations where the relevant probability distributions can not be modeled precisely, and it would be difficult to use the CUSUM in these cases.

Closely related to change detection is statistical hypothesis testing.
In particular, in this work we attempt to leverage tools that have been developed for the problem of two-sample testing and adapt them for use in change detection. Two-sample testing is a non-parametric hypothesis testing task where the goal is to determine if two data sets come from the same distribution. For this problem, an approach based on kernel embeddings has been developed, termed Maximum Mean Discrepancy (MMD) \citep{gretton2012kernel}.
In the MMD approach, two datasets are compared by computing the distance between the corresponding empirical measures, using a distance which is induced by a positive definite kernel function (See Section \ref{sect:mmd} for formal definitions.) Compared to other distance measures, MMD distances are appealing because they admit very simple unbiased estimators. Furthermore, being defined through kernels, the methods are not restricted to Euclidean datasets, and are applicable to hypothesis testing problems involving strings, graphs,  and other structured data \cite{meanembeddingreview}.

Motivated by prior work using MMD in hypothesis testing,
we introduce the Kernel Cumulative Sum (KCUSUM) algorithm
(Algorithm \ref{algo:twosampcd}).
Unlike the CUSUM,
the KCUSUM does not require exact specifications of the pre- and post-change distributions.
Instead,
it relies on a database of samples from the pre-change distribution,
and continuously compares incoming observations
with
samples from the database using a kernel function chosen by the user.
In this way, the KCUSUM is able to detect a change to any distribution whose distance from the pre-change distribution is above a user-supplied threshold.
Our main theoretical results (Theorem \ref{prop:kcuprop-abstract} and Corollary \ref{prop:kcuprop})
concern the delays and false alarms of the KCUSUM.  We derive an upper bound on the time to detect a change (that is, the delay) and a lower bound on the time until a false alarm occurs when there is no change. 
The analysis builds on existing theory  for the CUSUM \cite{lorden1970excess,lorden1971}.

The rest of this paper is structured as follows. In Section \ref{sect:cusum} we review the basic notions of the CUSUM algorithm and  in Section \ref{sect:mmd} we review the MMD framework. We introduce the KCUSUM algorithm in Section \ref{sect-two}, where we also present the analysis. In Section \ref{sect-emp} we present the results of a numerical evaluation.
\section{Cumulative Sum algorithm\label{sect:cusum}}
We consider a sequence of random variables
$\{x_n\}_{n\geq 1}$
and assume that there is an index $t$ such that for all $1\leq i <t$ the variables $x_i$ have the distribution $p_0$, and for $i\geq t$, the $x_i$ have the distribution $p_1$. Presently we assume the $x_i$ take values in some Euclidean space
$\mathbb{R}^d$,
although the kernel methods that we shall introduce are not restricted to this scenario. The index $t$ is referred to as the \textit{change point}. An online change detection algorithm tries to identify this change point in real-time, and bases the decision of whether or not a change has occurred by time $n$ on the data available up to time $n$.

\begin{figure*}[]
  \centering
  \hspace{-2em}
\includegraphics[width=0.35\linewidth]{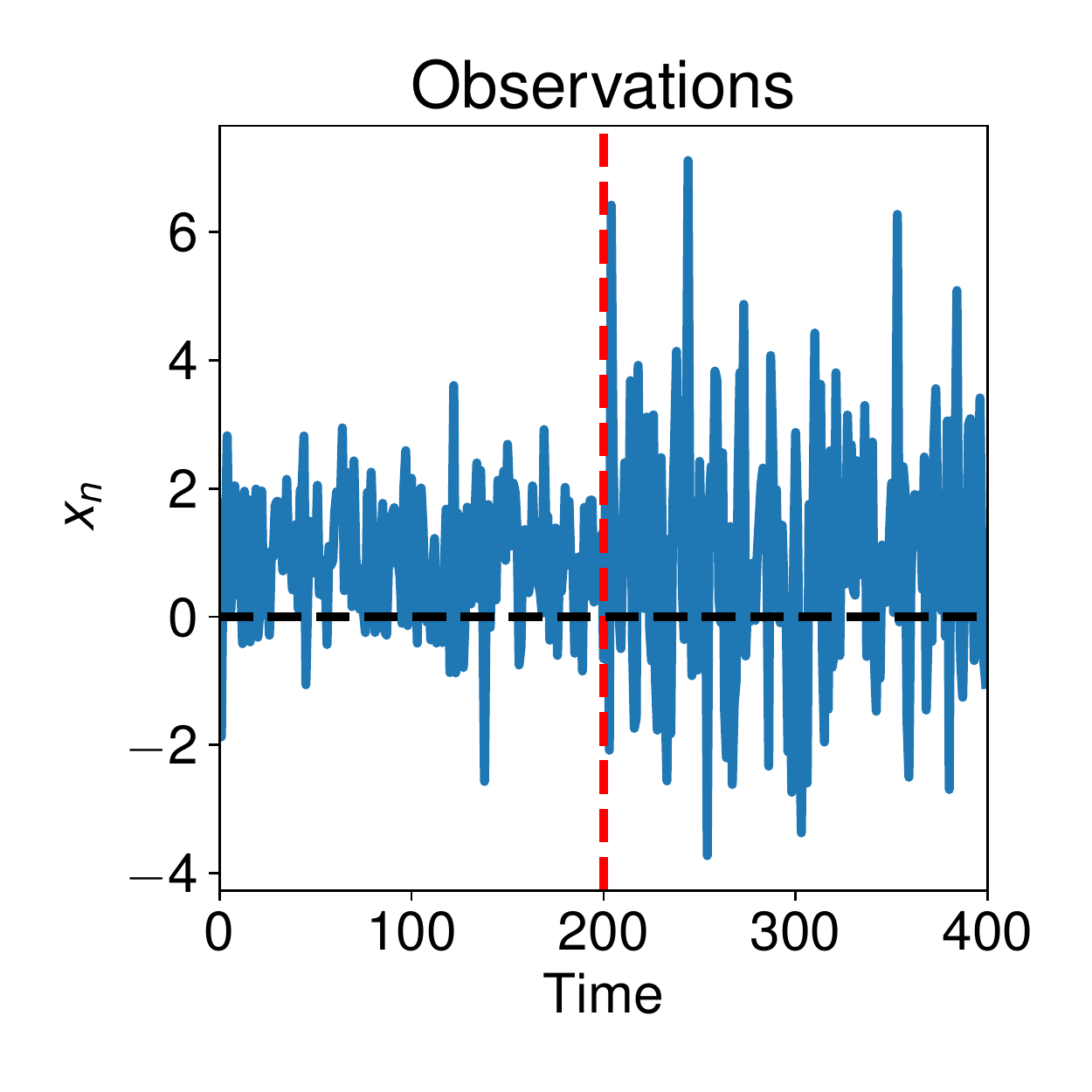}\hspace{-1em}
\includegraphics[width=0.35\linewidth]{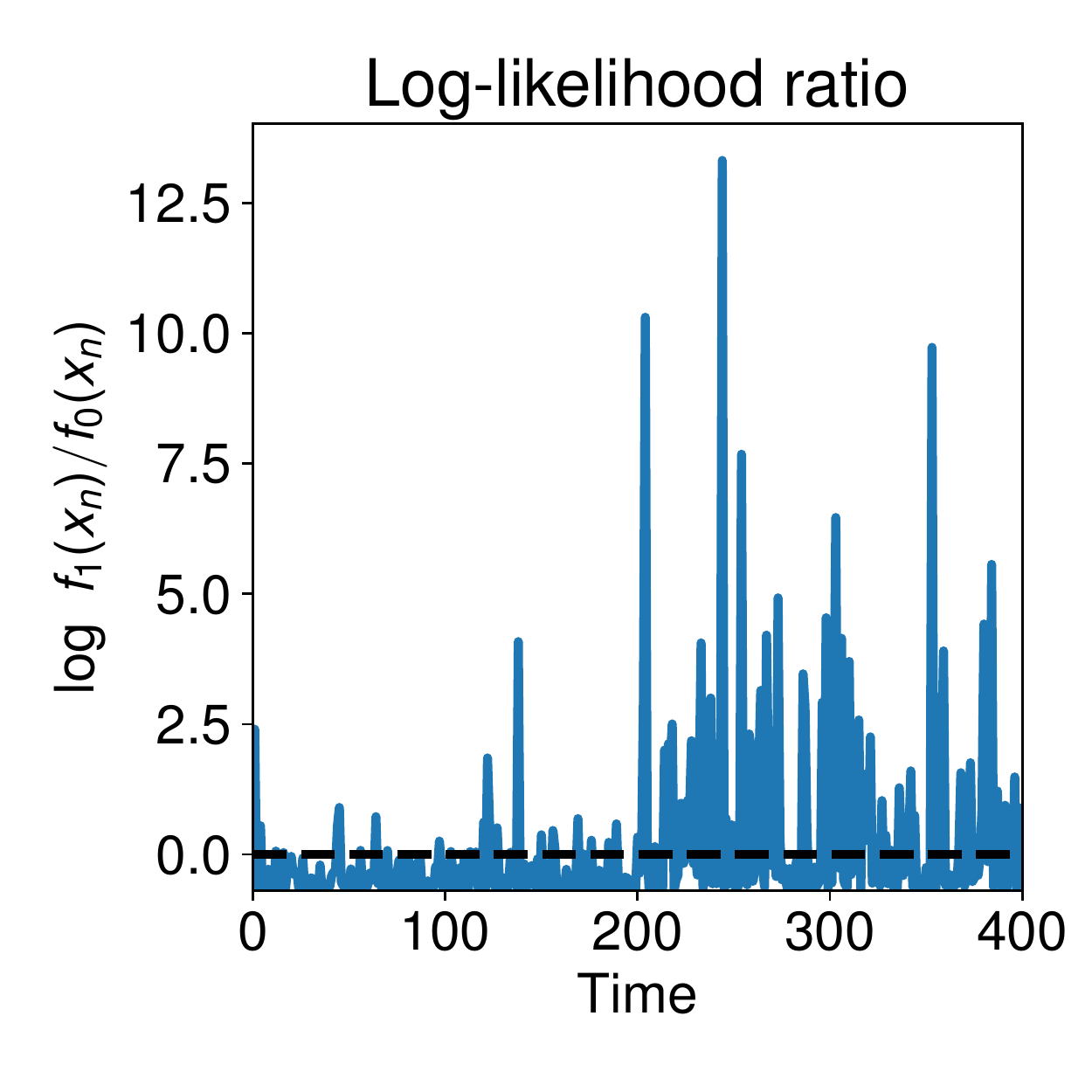}\hspace{-0.5em}
\includegraphics[width=0.35\linewidth, trim=0cm -0.0cm 0 -0cm]{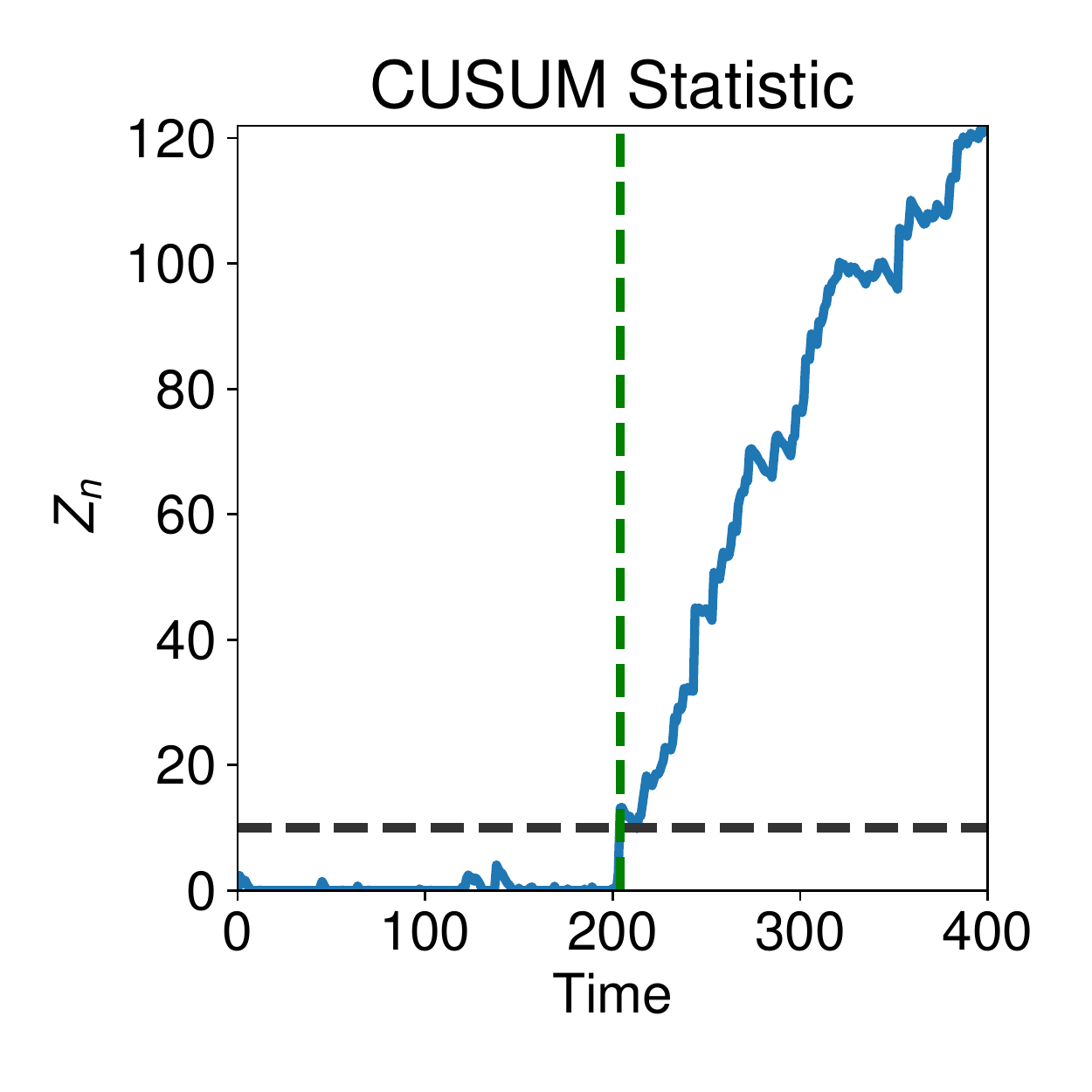}
\caption{Illustration of the CUSUM. For each observation (left) the log-likelihood ratio is computed  (middle) and added to a running sum (right). See main text for details.
  \label{myfig}}
\end{figure*}

The Cumulative Sum algorithm is an online change detection procedure introduced by Page \citep{page1954continuous}. For the purposes of introducing the CUSUM,  assume that the distributions $p_0$ and $p_1$ have densities $f_0$ and $f_1$ respectively.
 The steps of the CUSUM are  presented in Algorithm \ref{algo:cusum}. 
 At step $n$ of the procedure a data point $x_n$ is observed, and the log-likelihood ratio $\log \frac{f_1(x_n)}{f_0(x_n)}$ is calculated and the result added onto the statistic $Z_n$. If the result would be negative then $Z_n$ is set to zero,  effectively restarting the algorithm. If $Z_n$ crosses a threshold $h$ then a change is declared at time $n$. Formally, the CUSUM stopping time is
 $T_{\textrm{CUSUM}} = \inf\{ n\geq 1 \mid Z_n \geq h\}$.

For some insight into why the CUSUM works, consider the behavior of the log-likelihood ratio before and after the change. Before the change,  it
has mean $-d_{KL}(p_0,p_1)$, where
$
d_{\mathrm{KL}}(p_0,p_1)
=
\mathbb{E}_{p_0}\left[\log \frac{f_0(x)}{f_1(x)}\right]
$
is the Kullback-Liebler divergence between the distributions $p_0$ and $p_1$.
Since $d_{KL}$ is positive when $p_0\neq p_1$, the increment term will have  a negative mean.
This drift combined with the barrier at zero causes the statistic to stay near zero before the change. After the change, the increment has a positive mean equal to
$d_{KL}(p_1,p_0)$
and
$Z_n$ begins to increase, eventually crossing any positive threshold $h$ with probability one, which will cause the algorithm to end.
Beyond these heuristic arguments, the CUSUM can be shown to be optimal in a certain sense, as we review below.

\begin{algorithm}[t]
\caption{CUSUM Algorithm \citep{page1954continuous}\label{algo:cusum}}
\DontPrintSemicolon
\textbf{input:} Data $x_1,x_2,\hdots $ and threshold $h\geq 0$.
\\
\textbf{initialize} $Z_0 = 0$
\\
\textbf{for} $n=1,2,\hdots$ \textbf{do} \\
\quad 
$
Z_{n}
=
\max\left\{
0, Z_{n-1} + \log \frac{f_1(x_n)}{f_0(x_n)}
  \right\}$.
  \\
\quad  \textbf{if } $Z_n \geq h$ \textbf{then} set $T_{\textrm{CUSUM}} = n$ and \textbf{exit}.
  \\
\quad  \textbf{else } continue.
\\
\textbf{end}
\end{algorithm}
\begin{example}\label{cu-ex}
Denote by $\mc{N}(a,b)$ the normal distribution with mean $a$ and variance $b$.  Consider detecting a change in variance of normally distributed random variables, where the pre-change distribution is $\mc{N}(1,1)$ and the post-change distribution is $\mc{N}(1,4)$.
 A sample sequence $x_1,\hdots,x_n$ of length $n=400$  with a change point at $t=200$ is shown on the left in Fig. \ref{myfig}. The true change time is marked by a dashed line. The log likelihood ratio in this case is 
$\log\frac{f_1(x)}{f_0(x)} = \frac{3}{8}x^2 - \frac{3}{4}x + \log\frac{1}{2} + \frac{3}{8}$. The values of $\log\frac{f_1(x_n)}{f_0(x_n)}$ for $n=1,2,\hdots$ are plotted in the middle of Fig. 
 \ref{myfig}. We can see that the log likelihood ratio has a negative mean before the change and a positive mean after the change. The resulting CUSUM statistic $Z_n$ is shown on the right of Fig. \ref{myfig}. Using a threshold of $h=10$ results in detection at time $T_{\textrm{CUSUM}}=212$.
\hfill\ensuremath{\blacksquare}
\end{example}

Next we review the performance characteristics of the CUSUM.
Each possible change time defines a different distribution on the sequences 
$\{x_n\}_{n\geq 1}$.
If there is no change, then the variables are independent and identically distributed (i.i.d.) with
$x_i \sim p_0$ for all $i\geq 1$.
We denote this distribution on sequences by
$\mathbb{P}_{\infty}$, and denote expectations with respect to this distribution by
$\mathbb{E}_{\infty}$.
In general, a change at time
$ t \geq 1 $
means that for $1 \leq i \leq t-1$ the $x_i$ are i.i.d. with
$x_i \sim p_0$ and for $i \geq t$ they are i.i.d. with
$x_i \sim p_1$.
We let 
$\mathbb{P}_{t}$ denote the probability distribution on sequences under the assumption of a  change at time $t$, and $\mathbb{E}_{t}$ represents the expectation under this distribution.
For
$n\geq 1$ let
$\mc{F}_n$ be the $\sigma$-algebra
$\mc{F}_n = \sigma(x_1,x_2,\hdots,x_n)$.
Intuitively, $\mc{F}_n$ represents the information contained in the observations up to and including time $n$. Formally, an online change detector can be represented as a stopping time  with respect to the filtration $\{\mc{F}_n\}_{n\geq 1}$, that we denote by $T$, with the interpretation that the value of $T$ is an estimate of the change point.

When running a change detector on a particular sequence, two types of errors may occur. There may be a false alarm, which means the change is detected too early, or there may be a delay, meaning the change was detected late. We formalize the levels of false alarm and delay using the metrics of average run length to false alarm ($\operatorname{ARL2FA}$)  and worst case average detection delay ($\operatorname{ESADD}$). These are standard metrics for evaluating change detectors \citep{lorden1971,olympiabook,moustakidesnumerical}.

For a change detector $T$, the average time to false alarm is 
\begin{equation}\label{ttfa}
  \operatorname{ARL2FA}=\mathbb{E}_{\infty}[T].
\end{equation}
That is,
the $\operatorname{ARL2FA}$
is the average amount of time until a change is detected given a  sequence of observations with no change.

We measure delay using Lorden's criterion \cite{lorden1971}.
If
$T$
is a change detector and there is a change at time
$t\geq 1$,
then the expected delay given the history of the observations up to time
$t-1$
is the random variable
$
\mathbb{E}_t[
  ( T -t)^{+} \mid \mc{F}_{t-1}
].
$
\footnote{The notation $(\cdot)^{+}$ refers to the positive part function: $(x)^+ = \max\{0,x\}$.}
The worst case delay for a change at time
$t$
is obtained by taking an essential supremum over all possible sequences of length
$t-1$,
denoted by
$\esssup \mathbb{E}_t[(T-t)^{+} \mid \mc{F}_{t-1}]$.
Finally, taking the supremum over all change times
$t$
we obtain the worst case delay:
\begin{equation}\label{delay}
  \begin{split}
    &\operatorname{ESADD} =
  \sup_{1\leq t <\infty}\esssup \mathbb{E}_{t}[ (T - t)^{+} \mid \mc{F}_{t-1}].
  \end{split}
\end{equation}
Notably, the CUSUM provides the optimal trade off between the time to false alarm and the worst case delay. This was first proved in an asymptotic form in \citep{lorden1971}, and the result was later proved in full non-asymptotic form in \citep{moustakides1986optimal}. A proof of optimality can be found in
 \citep{moustakides1986optimal,olympiabook}. Further information on the derivation and properties of the  CUSUM may be found in \cite{Basseville1993}, or \cite{olympiabook}.

The precise relation between the threshold $h$ and the performance levels ARL2FA and ESADD is non-trivial and involves solving numerically intractable integral equations \citep{page1954continuous,moustakidesnumerical}. However, it is possible to derive some upper and lower bounds that may be useful in practice.  For the sake of comparison with the analysis of Kernel CUSUM, it will interesting to consider the following quantitative bounds on the performance of the CUSUM. 
\begin{prop}\label{cusum-prop}
  The performance of the $\mathrm{CUSUM}$ (Algorithm \ref{algo:cusum}) can be bounded as follows.
  The time to false alarm satisfies
  $$\operatorname{ARL2FA}_{\mathrm{CUSUM}} \geq \exp(h).$$
  If it also holds that
  $\mathbb{E}_1\left[ \big(\big(\log\frac{f_1(x)}{f_0(x)}\big)^{+}\big)^{2}\right] < \infty$ then
  \begin{equation*}
    \begin{split}
      &\operatorname{ESADD}_{\mathrm{CUSUM}}
      \leq \\&\quad
      \frac{h}{d_{\mathrm{KL}}(p_1,p_0)} +
      \frac{1}{d_{\mathrm{KL}}(p_1,p_0)^{2}}
      \mathbb{E}_1\left[ \left( \left( \log\tfrac{f_1(x)}{f_0(x)} \right)^{+}\right)^2 \right].
    \end{split}
\end{equation*}
\end{prop}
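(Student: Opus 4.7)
The plan is to handle the two bounds separately, exploiting the renewal structure of the CUSUM statistic $Z_n$.

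For the ARL2FA bound, I would decompose the path until alarm into successive ``cycles,'' each starting when $Z_n = 0$ and terminating either at the next return to zero or at the alarm. Within a cycle, $Z_n$ coincides with a running sum $S_n = \sum L_i$ of log-likelihood ratios $L_i = \log(f_1(x_i)/f_0(x_i))$, and the cycle ends at $\tau = \inf\{n \geq 1 : S_n \geq h \text{ or } S_n \leq 0\}$. Since $\mathbb{E}_\infty[e^{L_1}] = \int f_1 = 1$, the process $\{e^{S_n}\}$ is a nonnegative martingale under $\mathbb{P}_\infty$. An optional-stopping argument, justified by dominated convergence since $e^{S_{n \wedge \tau}} \leq e^h$, yields $\mathbb{E}_\infty[e^{S_\tau}] = 1$. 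Isolating the contribution of the alarm event (on which $e^{S_\tau} \geq e^h$) then forces the per-cycle alarm probability $p$ to satisfy $p \leq e^{-h}$. Since the number of cycles until alarm is geometric with parameter $p$ and each cycle has length at least one, $\mathbb{E}_\infty[T_{\mathrm{CUSUM}}] \geq 1/p \geq e^h$.

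For the ESADD bound, I would first reduce to estimating $\mathbb{E}_1[T_{\mathrm{CUSUM}}]$ started from $Z_0 = 0$. Conditional on $\mathcal{F}_{t-1}$ and under $\mathbb{P}_t$, the post-change dynamics of $Z_n$ depend on the past only through $Z_{t-1}$, and the first-passage time to $h$ is stochastically smallest when $Z_{t-1}$ is largest; hence the essential supremum in the definition of $\operatorname{ESADD}$ is attained at $Z_{t-1} = 0$, which is realized at $t = 1$. Next, the classical representation $Z_n = S_n - \min_{0 \leq k \leq n} S_k \geq S_n$ implies $T_{\mathrm{CUSUM}} \leq \tau := \inf\{n : S_n \geq h\}$. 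Under $\mathbb{P}_1$ the drift $\mu := \mathbb{E}_1[L_1] = d_{\mathrm{KL}}(p_1, p_0)$ is strictly positive, so $\tau < \infty$ almost surely with finite mean, and Wald's identity gives $\mathbb{E}_1[S_\tau] = \mu \mathbb{E}_1[\tau]$. Because $S_{\tau - 1} < h$, the overshoot satisfies $S_\tau - h \leq L_\tau \leq L_\tau^+$, leading to $\mu \mathbb{E}_1[\tau] \leq h + \mathbb{E}_1[S_\tau - h]$.

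The final step, which I expect to be the main obstacle, is the overshoot estimate. Invoking Lorden's excess-over-the-boundary theorem \cite{lorden1970excess} — which is the point at which the hypothesis $\mathbb{E}_1[(L_1^+)^2] < \infty$ is used — one obtains $\mathbb{E}_1[S_\tau - h] \leq \mathbb{E}_1[(L_1^+)^2]/\mu$. Substituting and dividing by $\mu$ yields $\mathbb{E}_1[\tau] \leq h/\mu + \mathbb{E}_1[(L_1^+)^2]/\mu^2$, which by the reduction above transfers to the claimed bound on $\operatorname{ESADD}_{\mathrm{CUSUM}}$. The remaining ingredients — optional stopping, Wald's identity, and the monotonicity of the CUSUM recursion — are routine; the nontrivial renewal-theoretic content, controlling the expected excess over the boundary by a second moment of $L_1^+$, is the piece that must be imported from \cite{lorden1970excess}.
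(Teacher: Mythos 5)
Your proposal is correct, and its probabilistic core is the same as the paper's: the false-alarm bound comes from the exponential martingale $e^{S_n}$, which has unit mean under $\mathbb{P}_\infty$ because $\mathbb{E}_\infty[f_1(x)/f_0(x)]=1$, and the delay bound comes from Wald's identity plus Lorden's excess-over-the-boundary estimate $\mathbb{E}_1[S_\tau-h]\leq \mathbb{E}_1[(L_1^+)^2]/\mu$ (the paper cites this in packaged form as Corollary~1 of \cite{lorden1970excess}, its Proposition~\ref{lordencor}; you re-derive that corollary from the underlying overshoot theorem). Where you differ is in the reduction from the CUSUM stopping time to a one-sided random-walk crossing problem: the paper invokes Theorem~2 of \cite{lorden1971}, which represents $T_{\mathrm{CUSUM}}$ as $\inf_n c_n$ over a family of restarted one-sided crossing times and directly yields $\operatorname{ARL2FA}\geq 1/\mathbb{P}_\infty(c_1<\infty)$ and $\operatorname{ESADD}\leq\mathbb{E}_1[N]$, whereas you use a renewal decomposition into i.i.d.\ zero-to-zero cycles with a geometric cycle count for the false-alarm side, and a monotonicity-in-$Z_{t-1}$ argument to identify the essential supremum with the start-from-zero case for the delay side. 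Your reductions are more self-contained and make the structure visible, at the cost of two small technical points you should tidy up: (i) $e^{S_{n\wedge\tau}}\leq e^h$ fails at $n=\tau$ because of the overshoot, so the claimed dominated-convergence equality $\mathbb{E}_\infty[e^{S_\tau}]=1$ is not justified as stated; but Fatou's lemma applied to the stopped nonnegative martingale gives the one-sided inequality $\mathbb{E}_\infty[e^{S_\tau}\mathbf{1}_{\{\tau<\infty\}}]\leq 1$, which is all you need for $p\leq e^{-h}$; and (ii) the esssup reduction should note that the event $\{Z_{t-1}=0,\ T\geq t\}$ has positive $\mathbb{P}_\infty$-probability for every $t$ (it does, since the pre-change increments have negative mean), so the worst case is genuinely realized. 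With those repairs both of your bounds match the paper's, and in fact your cycle probability $p$ is no larger than the paper's $\alpha=\mathbb{P}_\infty(\sup_n S_n>h)$, so your ARL2FA argument is, if anything, slightly sharper before both are relaxed to $e^h$.
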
\vspace{0.5em}
\begin{proof}
  See the appendix.
  \end{proof}
The intuitive interpretation of these equations is that increasing the threshold $h$ causes an increase the time until false alarm, but it also leads to increased detection delay. From the second equation, we can see that the detection delay increases when the distributions get closer.
The term involving the positive part of the log-likelihood ratio is related to the variance of the CUSUM statistic. In Corollary \ref{prop:kcuprop} below we shall obtain somewhat analogous bounds for the Kernel CUSUM.
\section{Maximum Mean Discrepancy\label{sect:mmd}}
Two-sample testing refers to the problem of determining whether two datasets are drawn from the same distribution. One approach to this problem is to consider the empirical measures defined by the datasets, and to compute the distance between the empirical measures using a probability metric. If enough data points are used, then the empirical distance should be close to the true distance. If the distance is large then we can be confident that the datasets are generated by different distributions. This is the idea underlying several classical tests, such as
the Kolmogorov-Smirnov test \citep{feller1948kolmogorov}, 
the Cramer-von-Mises test \citep{anderson1962}
and the Anderson-Darling test
\citep{adarling}.

The  Maximum Mean Discrepancy (MMD) approach is also based on computing the distance between empirical distributions. In MMD, the datasets are implicitly embedded in a Reproducing Kernel Hilbert Space (RKHS) corresponding to a user-supplied kernel function $k$, and the distance between the embeddings is computed \citep{gretton2012kernel}. Compared to classical approaches, there are several features of MMD that make it appealing for non-parametric statistics.  First, the MMD distance has a range of simple unbiased estimators (see the definition of $\rho_L$ below for one such example.) Second,
there is the flexibility offered by choice of kernel, and using kernels means the test can be applied on datasets without a natural Euclidean representation, such as strings, graphs and other structured data \cite{gartner2003survey, vishwanathan2010graph}.

Let $\mathcal{X}$ be a set, and let
$k :\mathcal{X}\times\mathcal{X} \to \reals$
be a kernel on this set;
this is a symmetric, positive definite function that we regard intuitively as a similarity measure\footnote{Symmetric means that
  $k(x,y) = k(y,x)$
  and positive-definite means that for any choice of
  $n$
  elements
  $x_1,\hdots,x_n$
  and
  $n$
  real numbers
  $a_1,\hdots,a_n$,
  we have
  $\sum\limits_{i=1}^{n}\sum\limits_{j=1}^{n} a_i a_j k(x_i,x_j) \geq 0$}.
The reader may have in mind the set $\mathcal{X} =\reals^n$ and the Gaussian kernel
\begin{equation}\label{eqn:gauss-kern}
  k(x,y)= \exp(-\|x-y\|^{2}/2)
\end{equation}
Some other choices for the kernel function may be found in \citep[Table~3.1]{meanembeddingreview}.
Further suppose that
$\mathcal{X}$
has the structure of a measurable space
$(\mathcal{X},\Sigma)$ and that $k$ is a measurable function on
$\mathcal{X}\times \mathcal{X}$ with the product $\sigma$-algebra.
Define $\mathcal{P}(\mathcal{X})$ to be the set of all probability measures on $(\mathcal{X},\Sigma)$,
and using the kernel $k$,  define the subset 
$
\mathcal{P}_k
=
\{ \mu \in \mathcal{P}(\mathcal{X}) \mid  \mathbb{E}_{x\sim\mu}[\sqrt{k(x,x)}] < \infty \}$.
If the kernel has the additional property of being characteristic\footnote{We refer the reader to \citep{gretton2012kernel} or \citep{meanembeddingreview} for a precise definition of characteristic kernel. For instance, if $\mathcal{X} = \mathbb{R}^n$ then the Gaussian kernel  is characteristic.}
then we may define the MMD metric on $\mathcal{P}_k(\mathcal{X})$, denoted $d_k$. This metric is defined as
 \begin{align*}
     &d_k(p_0,p_1) =  \\
   &
   \sqrt{\mathbb{E}_{p_0\times p_0}[k(x,x')]
     +\mathbb{E}_{p_1\times p_1}[k(y,y')]
     - 2\mathbb{E}_{p_0\times p_1}[k(x,y)]}.
\end{align*}
See \citep{gretton2012kernel} for more details. 

 One of the unbiased estimators of $d_k^2$ presented in \citep{gretton2012kernel} is the linear statistic
 $\rho_{\textrm{L}}$ defined below.
For convenience, the estimator is expressed using the following function $g$:
\begin{equation}\label{def:h}
  \begin{split}
g( (x_0, x_1), ( y_0, y_1) ) =
&k(x_0,x_1) + k(y_0,y_1)\\
&- k(x_0,y_1) - k(x_1,y_0).
\end{split}
\end{equation}
Consider  two data sets
$X = \{x_1,x_2,\hdots,x_n\} \setin \mc{X}$ and
$Y = \{ y_1,y_2,\hdots,y_n\} \setin \mc{X}$.
Then $\rho_{\textrm{L}}(X,Y)$ is
 \begin{equation}\label{rnd-walk}
  \begin{split}
    \rho_{\textrm{L}}(X,Y) &=
    \frac{1}{\lfloor n/2 \rfloor}\sum_{i=1}^{\lfloor n/2 \rfloor}
    g( (x_{2i-1},x_{2i}),(y_{2i-1},y_{2i})).
  \end{split}
\end{equation}
The linear statistic is interesting as it is a sum of i.i.d. terms, which means the central limit theorem may be used to approximate its distributional properties, which can help in tuning the thresholds when MMD is used for hypothesis testing \citep{gretton2012kernel}.
Furthermore, in the online setting we can study the trajectory of the statistic (that is, as a function of $n$) using the theory of sums of i.i.d. random variables, greatly facilitating analysis. We will exploit this structure in the analysis of KCUSUM below.

\subsection{Other related work \label{sect:other-work}}
The field of online change detection has its roots in industrial quality control \citep{shewhart1931economic}, and sequential analysis \cite{wald1947sequential}. In particular, the CUSUM procedure is closely related to the sequential probability ratio test (SPRT), which is a foundational online hypothesis testing algorithm \cite{wald1947sequential}.

An alternative to online change detection is offline detection, where the  algorithms do not run until the entire sequence is observed, and all data is used to make a decision. Kernel offline change  point analysis was explored in \citep{harchaoui2009kernel}.
Each hypothetical change time  is used to partition the dataset into two groups, consisting of prior observations and later observations, and the two resulting datasets are compared using a kernel based test. This is repeated for each possible change time. If one of the comparisons yields a significant discrepancy between the pre- and post- observation datasets, then a change is declared at the point where the difference was largest.
This procedure is of interest as it does not use reference data, instead basing its decisions on comparisons between disjoint sets of observations.

A number of approaches to non-parametric online change detection have been proposed in \citep{brodsky1993nonparametric}. As given, they apply only to the case of detecting changes in mean. One straightforward method of applying kernel non-parametric tests to online change detection is a sliding window approach, as pursued in \citep{mstatistic}. In this approach, at each time a decision regarding the change is made based on the distance between the most recent fixed-size block of data and a block from the reference distribution, using an MMD distance $d_k$. This can be seen as an kernel-based generalization of Shewhart control charts \citep{shewhart1931economic}. 

Besides the CUSUM, there are other algorithms with optimality properties, notably the Shiryaev-Roberts (SR) change detector \citep{shiropt}. While the CUSUM minimizes the worst case delay (ESADD), the SR detector minimizes a form of average delay. Like the CUSUM, the  SR test statistic admits a simple recursive form and hence it may be possible to extend our algorithmic construction to SR-type detectors as well, but this is outside the scope of this paper.

 \section{The Kernel CUSUM Algorithm}\label{sect-two}
 \begin{figure*}[]
   \centering
   \hspace{-2em}
\includegraphics[width=0.35\linewidth]{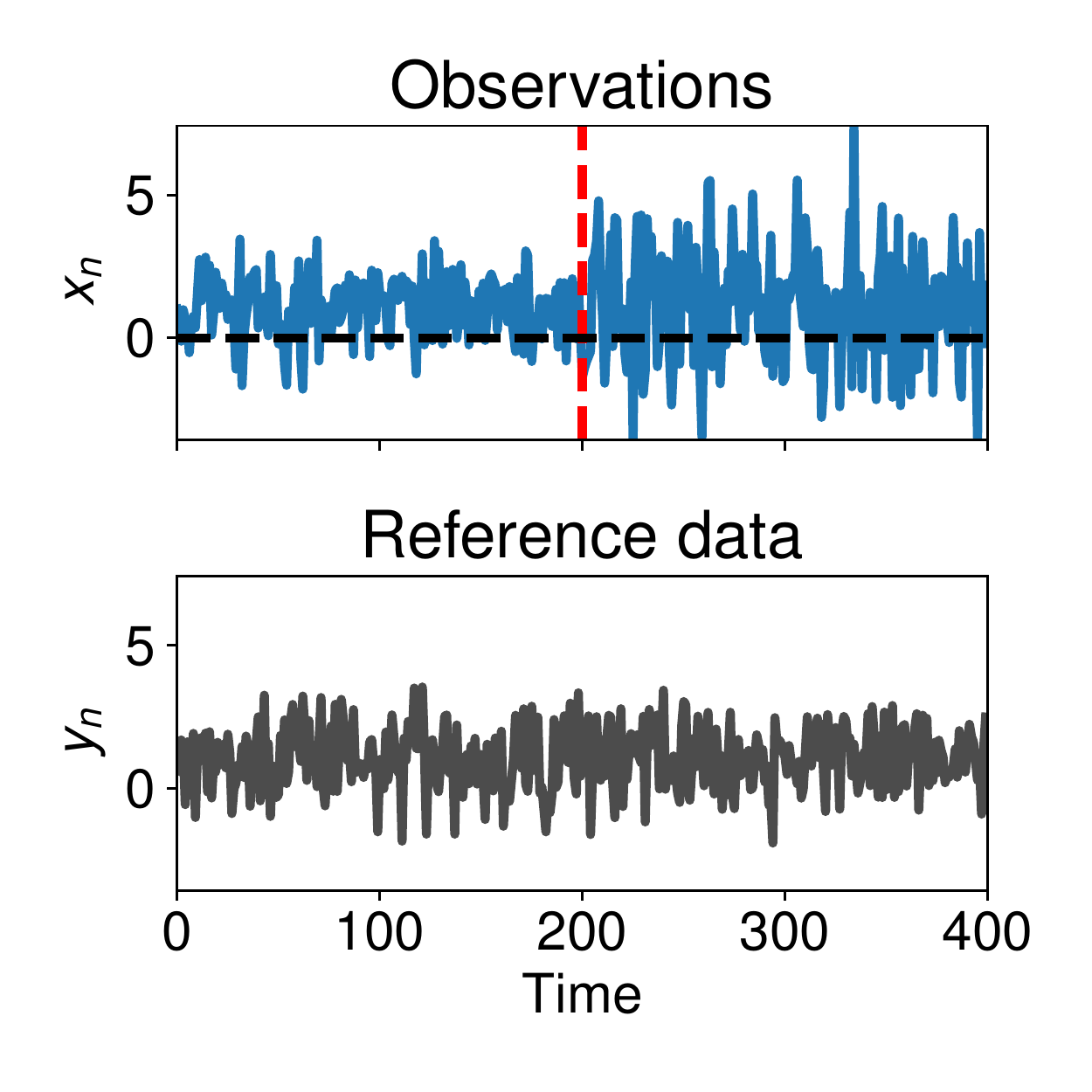}\hspace{-1em}
\includegraphics[width=0.35\linewidth]{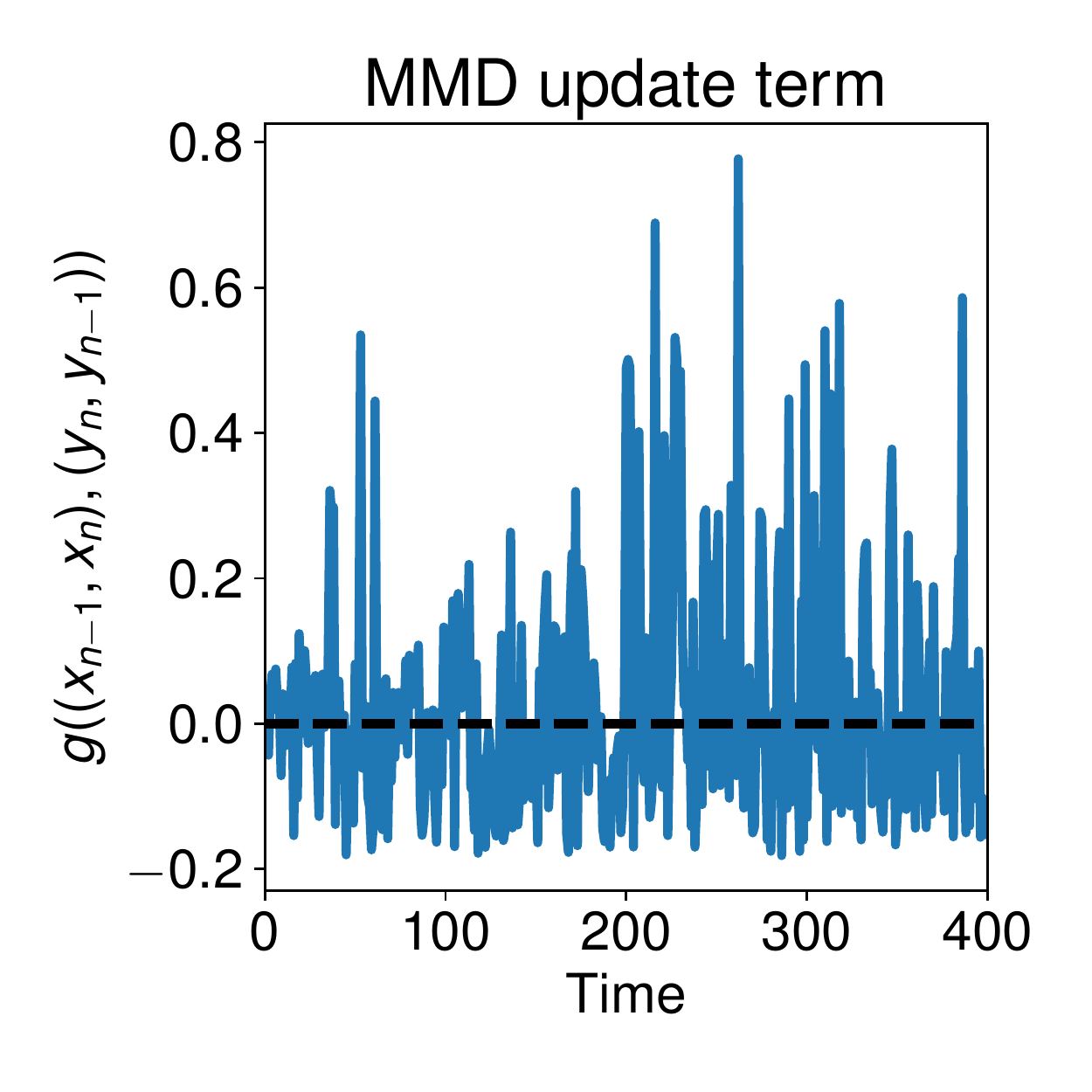}\hspace{-0.5em}
\hspace{0.5em}\includegraphics[width=0.33\linewidth,trim=0cm -0.8cm 0 -0cm]{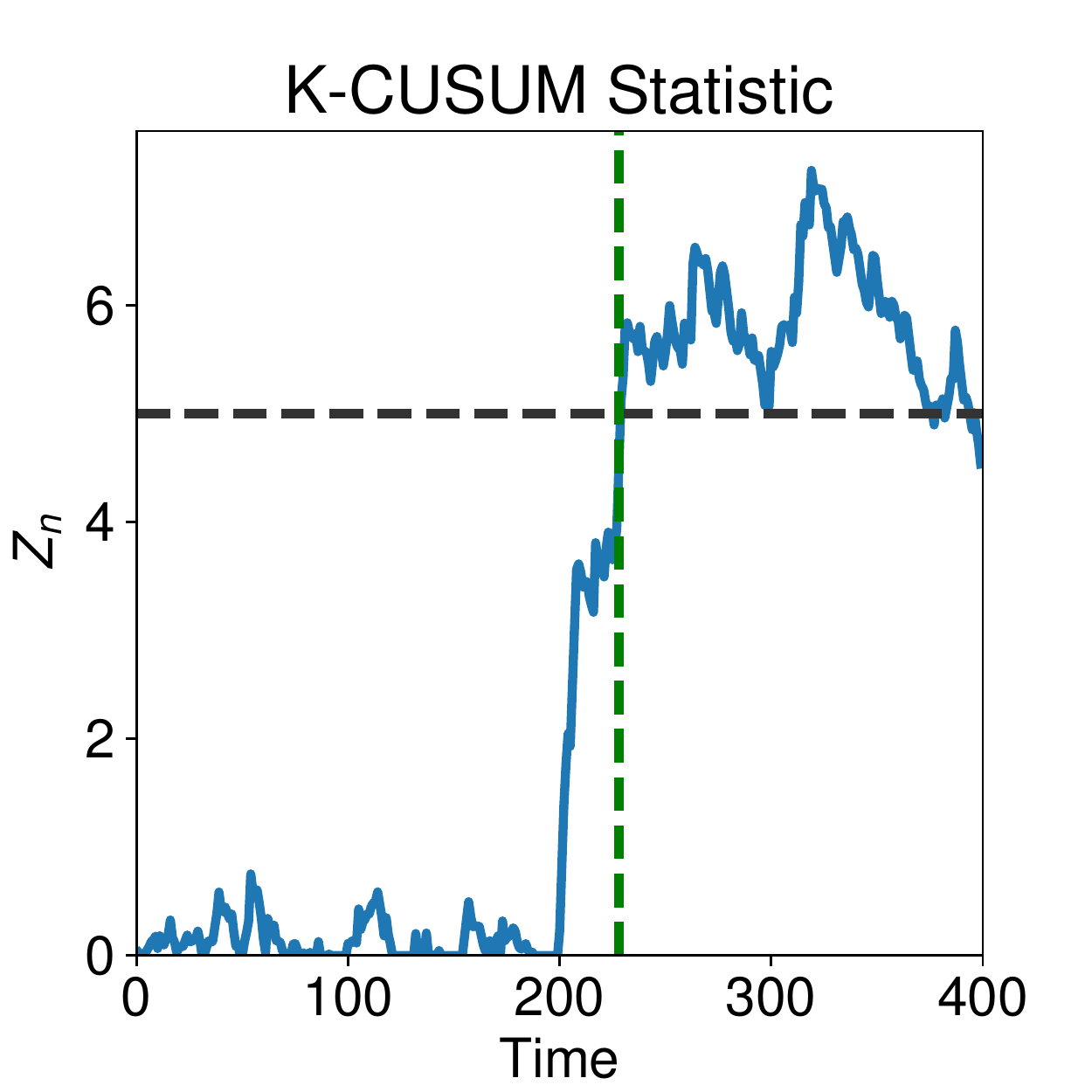}
\caption{ Illustration of the Kernel CUSUM. The observations (left upper) are compared with reference data (left lower) by computing the MMD statistic (middle).  A change is detected when the cumulative sum of the MMD comparisons crosses a threshold. See the main text for details.  \label{kfig}}
\end{figure*}
 The Kernel CUSUM (KCUSUM) algorithm blends features of the CUSUM procedure  with the MMD framework.
 The basic idea is that instead of using each new observation $x_n$ to compute the log-likelihood ratio
 $\log \tfrac{ f_1(x_n)}{f_0(x_n)}$, which is an estimator of the KL-divergence
 $d_{KL}$, we will use the new observation and some other random samples to compute an estimate of an MMD distance
 $d_k$.
 
 The KCUSUM algorithm is defined with the help of a shifted version of the function $g$ used to define the MMD statistics:
 Given
 $\delta>0$
 (the role of $\delta$ is explained in detail below), define $g_{\delta}$ as
 \begin{equation}\label{def:shiftg}
  \begin{split}
g_{\delta}( (x_0, x_1), ( y_0, y_1) ) =
&\,k(x_0,x_1) + k(y_0,y_1)\\
&- k(x_0,y_1) - k(x_1,y_0) -\delta.
\end{split}
 \end{equation}
 The details of the Kernel CUSUM are listed in Algorithm \ref{algo:twosampcd}.
 It is assumed that the change to be detected is from a reference distribution $p_0$ to an unknown distribution $p_1$.
 At even numbered iterations,
the most recent observation $x_n$ is paired with the previous observation
$x_{n-1}$
and these two points are compared with two reference points
$y_n,y_{n-1}$
using MMD. We subtract a constant
$\delta>0$
from the result to get $v_n$. The variable $v_n$ is  then added onto the statistic
$Z_{n-1}$
to get the next value
$Z_n$.
If the new value $Z_n$ would be negative, then it resets to $0$, effectively restarting the algorithm.
At odd numbered iterations, the statistic $Z_n$ is unchanged. Note that as a consequence, the algorithm only raises alarms at even numbered iterations.

The reason for subtracting a positive amount
$\delta$
at each step of
Algorithm \ref{algo:twosampcd}
is to guarantee that the increments $v_n$  have negative drift under the pre-change regime and positive drift in the post-change regime. This is a consistency property that enables us to formulate non-trivial bounds on performance.
Using the definitions in Algorithm \ref{algo:twosampcd}, it is evident that before a change,
$\mathbb{E}[v_n] = -\delta < 0$
and after a change,
$\mathbb{E}[v_n] = d_{k}^{2}(p_0,p_1) - \delta$.
In other words,  the algorithm can detect a change to any distribution
$p_1$
that is at least distance
$\sqrt{\delta}$
away from the reference distribution $p_0$.

\begin{algorithm}[t]
  \caption{Kernel CUSUM (KCUSUM) \label{algo:twosampcd}}
  \DontPrintSemicolon
  \textbf{input:}
  Thresholds $h\geq 0,\delta>0$
  and data
  $ x_1,x_2,\hdots$
  \\
  \textbf{initialize} $Z_1 = 0$
  \\
  \textbf{for} {$n=2,3,\hdots$} \textbf{do} \\
   \quad \textbf{sample} $y_n$ from reference measure $p_0$
    \\
    \quad \textbf{ if} $n$ is even \textbf{then } \\
    \quad   \quad
    $v_n = g_\delta(  (x_{n-1}, x_n), (y_{n-1},y_n) ) $. \\
    \quad \textbf{else} \\
    \quad\quad $v_n = 0 $ \\
     \quad \textbf{end}
    \\    
    \quad $Z_n = \max\{ 0, Z_{n-1} +  v_n\}$ \\
    \quad \textbf{if} $Z_n > h$ \textbf{then} set $T_{\textrm{KCUSUM}} = n$ and \textbf{exit}
    \\
    \quad \textbf{else} continue.
    \\
  \textbf{end}
\end{algorithm}

\begin{example}\label{ex:depict}
 Consider the problem of detecting a change in variance of normally distributed random variables, as in  in Example \ref{cu-ex}.
  The upper left plot of  Fig. \ref{kfig} shows the stream of observations, which are normally distributed with a change in variance at time $t=200$.
  The KCUSUM is based on the linear statistic
  $\rho_L$,
  so at each time $n$ the MMD estimate
  $k(x_{n-1},x_{n}) + k(y_{n-1},y_{n}) - k(x_n,y_{n-1}) - k(x_{n-1},y_n)$ is computed, and this quantity is plotted
   in the middle of Fig. \ref{kfig}.
  On the right is the resulting $Z_n$ sequence.
  As in the CUSUM, a simple threshold is used to decide that a change has occurred.
  For this particular realization of the variables, a threshold of $h=5$ results in detection at
  $T_{\textrm{KCUSUM}} = 225$.
  The value of $\delta$ was $\delta=1/40$.
\hfill\ensuremath{\blacksquare}
\end{example}

\,

Next we consider  the delay and false alarm rate of the KCUSUM.
The time to false alarm is defined as in Equation \eqref{ttfa}.
The worst case delay is defined as in Equation \eqref{delay}, using the filtration
$\{\mc{F}_{n}\}$ where
$\mc{F}_n = \sigma(x_1,y_1,\hdots,x_n,y_n)$.
To prove the bounds  we adapt the methods of \citep{lorden1971}, which
allows one to reduce the problem of analyzing the CUSUM to that of  analyzing a random walk with i.i.d. terms.

For the analysis it will be convenient to group the variables together as $\{z_n\}_{n\geq 1}$ where for $n\geq 1$,
  \begin{equation}\label{zdef}
    z_n = ( x_{2n-1},x_{2n},y_{2n-1},y_{2n}).
    \end{equation}
  The grouping reflects how the algorithm process the data in blocks of two. As a consequence, the bounds in this section involve additional factors of two compared the CUSUM (see Proposition \ref{cusum-prop}).

  Associated to the grouped sequence $\{z_n\}_{n\geq 1}$, define auxiliary stopping times $c_1,c_2,\hdots$ as
 $$c_n = \inf \left\{ k \geq n\,  \middle|\,  \sum\limits_{i=n}^{k}g_{\delta}(z_i) > h\right\}.$$ 
\begin{thm}\label{prop:kcuprop-abstract}
  Let
  $T_{\textrm{KCUSUM}}$
  be the change detector corresponding to the Kernel CUSUM 
  (Algorithm \ref{algo:twosampcd}).
  Let
  $p_0$ be the pre-change, or reference distribution.
  Then by using a threshold
  $\delta > 0$,
  the time to false alarm is at least
  \begin{equation}\label{kcuttfa-abstract}
    \operatorname{ARL2FA}_{\mathrm{KCUSUM}} \geq \frac{2}{\mathbb{P}_{\infty}(c_1 < \infty)}.
  \end{equation}

  If $p_1$ is a distribution with
  $d_k(p_0,p_1) > \sqrt{\delta}$, then
  the worst case detection delay is at most
\begin{equation}\label{kcuwcd-abstract}
  \operatorname{ESADD}_{\mathrm{KCUSUM}} \leq 2\mathbb{E}_{1}[c_1].
\end{equation}
\end{thm}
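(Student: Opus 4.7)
The plan is to analyze both bounds through the grouped i.i.d.\ random walk $S_k := \sum_{i=1}^k g_\delta(z_i)$. Since the statistic $Z_n$ only changes at even iterations, I set $W_k := Z_{2k}$, which satisfies $W_0 = 0$ and $W_k = \max\{0, W_{k-1} + g_\delta(z_k)\}$, so $T_{\mathrm{KCUSUM}} = 2\tau$ with $\tau := \inf\{k \geq 1 : W_k > h\}$. Under $\mathbb{P}_\infty$ the increments are i.i.d., and from the standard identity $W_k = \max_{0 \leq j \leq k}(S_k - S_j)$ one reads off the key consequence that $\tau \leq c_n$ for every $n \geq 1$ (because $c_n < \infty$ forces some $W_k \geq S_k - S_{n-1} > h$).

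For the $\operatorname{ARL2FA}$ bound (Eq.~\eqref{kcuttfa-abstract}) I would use a regenerative decomposition of $W$. Let $\sigma := \inf\{k \geq 1 : W_k = 0 \text{ or } W_k > h\}$ and iterate to obtain successive epochs $0 = A_0 < A_1 < A_2 < \cdots$, with $M := \inf\{m \geq 1 : W_{A_m} > h\}$, so that $\tau = A_M$. Since $W_{A_{m-1}} = 0$ on $\{M > m-1\}$ and $\{z_k\}$ is i.i.d., the strong Markov property implies the cycles $(A_m - A_{m-1}, W_{A_m})$ are i.i.d.\ copies of $(\sigma, W_\sigma)$; hence $M$ is geometric with parameter $q := \mathbb{P}_\infty(W_\sigma > h)$, and Wald's identity gives $\mathbb{E}_\infty[\tau] = \mathbb{E}_\infty[\sigma] \cdot \mathbb{E}[M] \geq 1/q$. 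Finally, on $\{W_\sigma > h\}$ the walk has not yet reset, so $W_\sigma = S_\sigma > h$, forcing $\sup_k S_k > h$ and hence $c_1 < \infty$; therefore $q \leq \mathbb{P}_\infty(c_1 < \infty)$, and converting to original time via $\operatorname{ARL2FA} = 2\mathbb{E}_\infty[\tau]$ yields~\eqref{kcuttfa-abstract}.

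For the $\operatorname{ESADD}$ bound (Eq.~\eqref{kcuwcd-abstract}), fix a change at original time $t$ and let $k^* := \lceil (t+1)/2 \rceil$ be the smallest grouped index such that $z_k$ consists entirely of post-change observations for every $k \geq k^*$. Applying $\tau \leq c_n$ with $n = k^*$ gives $\tau \leq c_{k^*}$. Writing $\tilde c := c_{k^*} - (k^*-1)$, the i.i.d.\ post-change structure of $\{z_k\}_{k \geq k^*}$ under $\mathbb{P}_t$ implies that $\tilde c$ has the distribution of $c_1$ under $\mathbb{P}_1$ and depends only on $x_i, y_i$ with $i \geq 2k^* - 1 \geq t$, hence is independent of $\mathcal{F}_{t-1}$. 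A parity check shows $2(k^*-1) \leq t$ in both cases, so $T_{\mathrm{KCUSUM}} - t \leq 2c_{k^*} - t = 2\tilde c + 2(k^*-1) - t \leq 2\tilde c$, and therefore $(T_{\mathrm{KCUSUM}} - t)^+ \leq 2\tilde c$. Taking conditional expectations gives $\mathbb{E}_t[(T_{\mathrm{KCUSUM}} - t)^+ \mid \mathcal{F}_{t-1}] \leq 2\mathbb{E}_1[c_1]$, a bound preserved under esssup and sup over $t$.

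The main obstacle is the bookkeeping between the grouped index $k$ and the original index $n$: one must handle both parities of $t$ to see that the block straddling the change (when $t$ is even) can be skipped without losing the factor of $2$, and must verify the independence claim in the conditional expectation step. A secondary technical point is justifying the strong Markov step in the renewal argument, which is cleanest if one works directly with the i.i.d.\ representation of $\{z_k\}$ and splits the sequence at the regeneration epochs, rather than treating $W$ as a Markov chain in its own right.
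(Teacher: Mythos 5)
Your proof is correct, and the overall architecture matches the paper's: reduce to the grouped walk via $T_{\mathrm{KCUSUM}}=2\tau$, use $\tau\le c_n$ for all $n$, and split the delay analysis by the parity of $t$ so that the straddling block can be skipped when $t$ is even. Your ESADD argument with $k^*=\lceil (t+1)/2\rceil$ is exactly the paper's two-case computation, just written uniformly; the independence and equidistribution claims you flag are the same ones the paper uses in its steps C and D. The one place you genuinely diverge is the $\operatorname{ARL2FA}$ bound: the paper simply invokes Theorem 2 of \cite{lorden1971} to get $\mathbb{E}_{\infty}[c]\ge 1/\mathbb{P}_{\infty}(c_1<\infty)$, whereas you re-derive this from scratch via the regeneration epochs of the reflected walk, the geometric cycle count, Wald's identity, and the observation that on $\{W_\sigma>h\}$ the walk has not reset so $W_\sigma=S_\sigma$ and hence $c_1<\infty$. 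That derivation is sound (it is essentially Lorden's own proof of the cited result) and has the virtue of making the theorem self-contained; the only small points you leave implicit are that $\sigma<\infty$ a.s.\ under $\mathbb{P}_{\infty}$ (which follows from the negative drift $-\delta$ of the increments) and that the $q=0$ case is handled trivially since then $\tau=\infty$ a.s.
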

\,
\begin{proof}
    Let
  $b_0=0$ and for $n\geq 1$ let
  $b_n = \max\{ 0 , b_{n-1} + g_{\delta}(z_n)\}.$
 Define the stopping time $c$ as 
 \begin{equation}\label{eqn:c}
   c = \inf\{ n\geq 1 \mid b_n > h\}.
 \end{equation}
 The relation between $c$ and the  KCUSUM stopping time  $T_{\textrm{KCUSUM}}$ is
    \begin{equation}\label{reduced-relation}
      T_{\textrm{KCUSUM}} = 2c.
      \end{equation}
 Note that, as discussed in \cite{lorden1971}, the stopping time $c$ can be represented as
        \begin{equation}\label{c-repr}
          c  = \inf_{n\geq 1} c_n.
        \end{equation}
       and each stopping time
    $c_n$
     uses the same decision rule, the only difference being that they operate on shifted versions of the input sequence
    $\{x_i\}_{i\geq n}$.
     In this setting Theorem 2 from \cite{lorden1971} is applicable, which yields a lower bound on
     $c$
     under
     $\mathbb{P}_{\infty}$:
 \begin{equation}\label{lorden-res}
   \mathbb{E}_{\infty}[c] \geq \frac{1}{\mathbb{P}_{\infty}(c_{1} < \infty)}.
   \end{equation}
 Combining
 Equations \eqref{reduced-relation}
 and
 \eqref{lorden-res} yields the claim \eqref{kcuttfa-abstract}.
 
Now we consider bounding the worst case delay.
If the sequence $\{x_i\}_{i\geq 1}$ has a change at an odd valued time, say $t=2m-1$ for $m\geq 1$, then
the  sequence $\{z_n\}_{n\geq 1}$ has a change at time $m$. Explicitly,
  $$z_{i} \sim \begin{cases}
     p_0 \times p_0 \times p_0 \times p_0 &\text{ for } 1\leq i < m, \\
     p_1 \times p_1 \times p_0 \times p_0  &\text{ for } m \leq i.
     \end{cases}$$
 From here we  reason as in Theorem 2 of \cite{lorden1971}:
 \begin{equation}\label{odd-case}
   \begin{split}
     &\mathbb{E}_{2m-1}[
       (T - (2m - 1))^{+}
       \mid
       \mc{F}_{2(m-1)}] \\
     &\quad\quad\stackrel{\textbf{A}}{=}
     \mathbb{E}_{2m-1}[ 
       (2c- 2m + 1)^{+}
       \mid
       \mc{F}_{2(m-1)}]
      \\
     &\quad\quad\stackrel{\textbf{B}}{\leq}
     \mathbb{E}_{2m-1}[
       (2c_m -2 m + 1)^{+} \mid 
       \mc{F}_{2(m-1)}]
\\
     &\quad\quad\leq
     2\, \mathbb{E}_{2m-1}[
       (c_m - m + 1)^{+} \mid \mc{F}_{2(m-1)}]\\
     &\quad\quad\stackrel{\textbf{C}}{=}
     2\, \mathbb{E}_{2m-1}[
       (c_m - m + 1)^{+} ] \\
    &\quad\quad\stackrel{\textbf{D}}{=}   2 \, \mathbb{E}_{1}[(c_1 - 1 + 1)^{+}]\\
    &\quad\quad=   2 \, \mathbb{E}_{1}[c_1].
   \end{split}
   \end{equation}
 Step \textbf{A} follows from Equation \eqref{reduced-relation} and  Step \textbf{B} follows since $c$ is the infimum of the $\{c_n\}_{n\geq 1}$. Step \textbf{C} follows  from the independence of $c_m$ from    $\mc{F}_{2(m-1)}$ and finally Step \textbf{D} follows from the fact that the distribution of $c_m-m$ under $t  = 2m-1$ is the same as the distribution of $c_1-1$ under $t=1$.
 
 The situation is slightly more complex if the change occurs a time $t$ that is even, say
 $t=2m$
 for some $m\geq 1$. In this case the grouped sequence $\{z_n\}_{n\geq 1}$  does not experience an abrupt change, and instead there are three relevant distributions. Specifically, 
    $$z_i \sim \begin{cases}
    p_0 \times p_0 \times p_0 \times p_0 &\text{ for } 1 \leq i < m, \\
    p_0 \times p_1 \times p_0 \times p_0 &\text{ for } i = m, \\
    p_1 \times p_1 \times p_0 \times p_0 &\text{ for } m < i.
    \end{cases}
    $$
Reasoning as in Equation \eqref{odd-case},
 then,
    \begin{equation}\label{even-case}
      \begin{split}
    &\mathbb{E}_{2m}[(T- 2m)^{+} \mid \mc{F}_{2m -1}] \\
    &\quad\quad= 
        \mathbb{E}_{2m}[(2c- 2m)^{+} \mid \mc{F}_{2m -1}]
        \\
        &\quad\quad\leq
        2 \mathbb{E}_{2m}[(c_{m+1}- m)^{+} \mid \mc{F}_{2m -1}]
        \\
        &\quad\quad=
        2 \mathbb{E}_{2m}[(c_{m+1}- m)^{+} ]
        \\
        &\quad\quad=2 \mathbb{E}_{2}[(c_{2}- 1)^{+} ]  \\
        &\quad\quad=2 \mathbb{E}_{1}[( 1 + c_{1}- 1)^{+} ] \\
        &\quad\quad=2 \mathbb{E}_{1}[c_1 ] .
      \end{split}
    \end{equation}
    Combining \eqref{odd-case} and \eqref{even-case}, we see that for all $t\geq 1$,
    \begin{equation}\label{odd-even-combine}
      \mathbb{E}_{t}[(T - t)^{+} \mid \mc{F}_{t-1}] \leq 2 \mathbb{E}_1[c_1]
    \end{equation}
    Combining Equation \eqref{odd-even-combine}
    with the definition of
    worst case delay \eqref{delay} yields the inequality \eqref{kcuwcd-abstract}.
\end{proof}

We can combine Theorem \ref{prop:kcuprop-abstract} with certain facts about random walks (Lemma \ref{funlemma} and Proposition \ref{lordencor}) to get more specific bounds on the delays and false alarms, as shown in the following Corollary. In this corollary, we assume that the kernel $k$ is bounded by a constant
$\|k\|_{\infty}$, and also assume that
$\delta$ is bounded by
$\delta < 2\|k\|_{\infty}$. This is a necessary assumption when the kernel is bounded, since if
$\delta \geq 2\|k\|_{\infty}$ then
$g_{\delta}(z) \leq 0$
for all $z$, and it will not be possible to detect any changes.
 \begin{figure}[]
   \centering   
\includegraphics[width=1\linewidth]{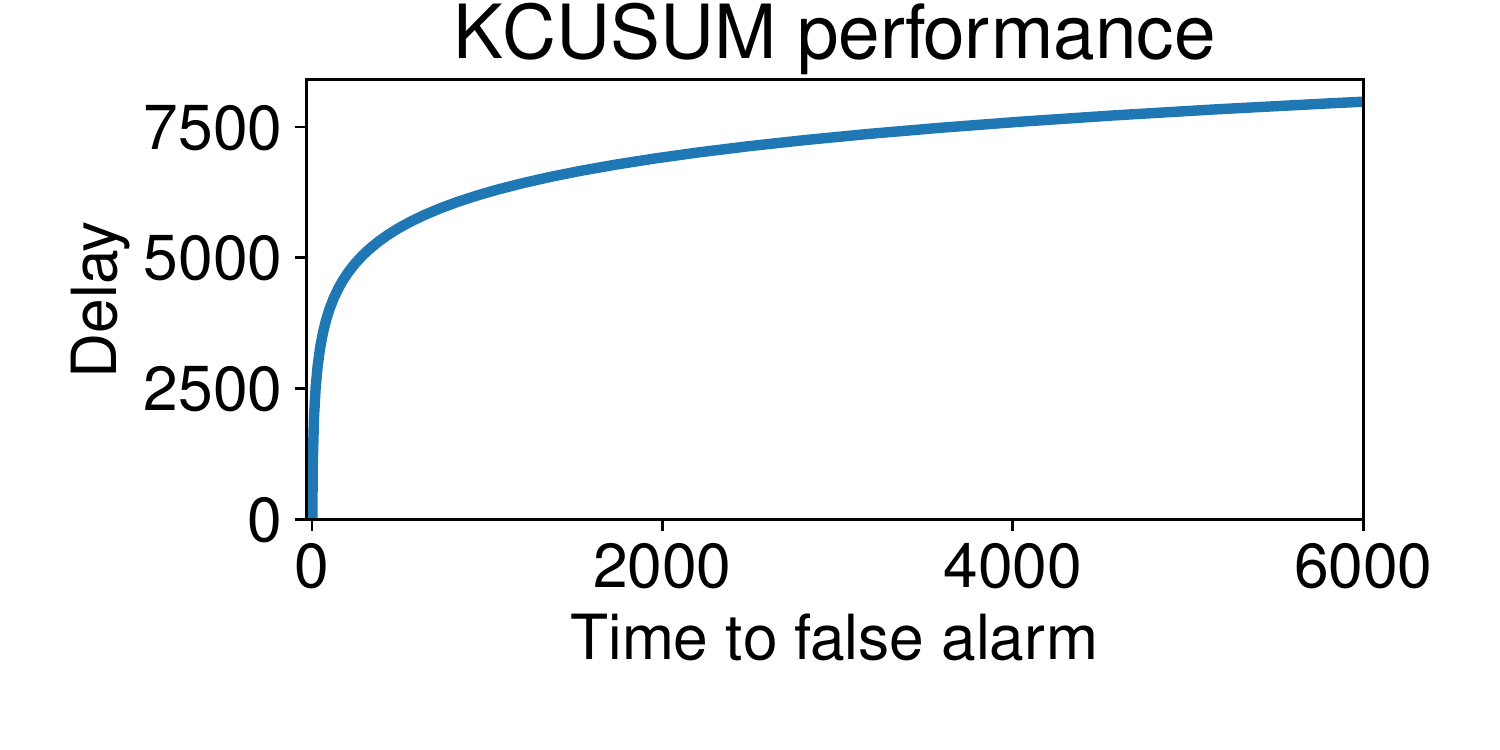}
\caption{ The logarithmic relation between time to false alarm and average delay in the kernel CUSUM as implied by our analysis. See text for details. \label{kcuvis}}
\end{figure}

\begin{cor}\label{prop:kcuprop}
  Let the assumptions of Theorem \ref{prop:kcuprop-abstract} hold.
  Further  assume that the kernel $k$ is bounded by a constant $\|k\|_{\infty}$ and 
  let $\delta < 2\|k\|_{\infty}$.
  Then the time to false alarm satisfies
  \begin{equation}\label{kcuttfa}
    \operatorname{ARL2FA}_{\mathrm{KCUSUM}}
    \geq
    2
    \exp\left(
    \frac{h}{4 \|k\|_{\infty}}
    \log \left(1+\frac{\delta}{4\|k\|_{\infty}}\right)
      \right).
  \end{equation}

  If $p_1$ is a distribution with
  $d_k(p_0,p_1) > \sqrt{\delta}$, then
  the worst case detection delay is at most
\begin{equation}\label{kcuwcd}
  \operatorname{ESADD}_{\mathrm{KCUSUM}}
  \leq
  \frac{2 h}
       {d_{k}(p_0,p_1)^2-\delta}
  +
  \frac{8\|k\|^2_{\infty}}
       {(d_k(p_0,p_1)^2-\delta)^{2}}.
 \end{equation}
  \end{cor}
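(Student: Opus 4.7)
The plan is to plug two concrete random-walk estimates---a Chernoff-type bound (Lemma \ref{funlemma}) and a Wald-type bound (Proposition \ref{lordencor})---into the abstract inequalities supplied by Theorem \ref{prop:kcuprop-abstract}. In both regimes the central object is the random walk $S_n=\sum_{i=1}^{n} g_\delta(z_i)$, whose increments are i.i.d.: under $\mathbb{P}_\infty$ they have mean $-\delta<0$, and under $\mathbb{P}_1$ they have mean $d_k^{2}(p_0,p_1)-\delta>0$. In either case, the assumption $|k|\leq\|k\|_\infty$ yields the uniform bound $|g_\delta(z)+\delta|\leq 4\|k\|_\infty$, most cleanly via the reproducing kernel Hilbert space identity $g_\delta(z)+\delta=\langle\phi(x_0)-\phi(y_0),\phi(x_1)-\phi(y_1)\rangle$ followed by Cauchy--Schwarz.

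For the false-alarm lower bound \eqref{kcuttfa}, I would apply Lemma \ref{funlemma} to the negative-drift walk under $\mathbb{P}_\infty$ to obtain $\mathbb{P}_\infty(c_1<\infty)\leq e^{-\theta^{*}h}$ with $\theta^{*}=\frac{1}{4\|k\|_\infty}\log\bigl(1+\delta/(4\|k\|_\infty)\bigr)$. This $\theta^{*}$ is the largest value for which $\mathbb{E}_\infty[\exp(\theta^{*}g_\delta(z))]\leq 1$: it arises from a Bennett-style bound on the moment generating function of a bounded mean-$(-\delta)$ random variable, combined with the elementary estimate $y/(1+y)\leq\log(1+y)$ to solve the resulting inequality for $\theta^{*}$. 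The hypothesis $\delta<2\|k\|_\infty$ keeps the algorithm non-degenerate, in the sense that $g_\delta$ still takes positive values with nonzero probability. Substituting into \eqref{kcuttfa-abstract} yields \eqref{kcuttfa}. For the delay upper bound \eqref{kcuwcd}, the walk has positive drift $\mu=d_k^{2}(p_0,p_1)-\delta$ under $\mathbb{P}_1$, and Proposition \ref{lordencor} provides an estimate of the form $\mathbb{E}_1[c_1]\leq h/\mu+C/\mu^{2}$, with $C$ an overshoot constant controlled by a second moment of $(g_\delta)^{+}$. Plugging $|g_\delta+\delta|\leq 4\|k\|_\infty$ in gives $C\leq 4\|k\|_\infty^{2}$, and multiplying by the factor of $2$ from Theorem \ref{prop:kcuprop-abstract} produces \eqref{kcuwcd}.

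The hard part is calibrating the exponent in the false-alarm bound. An off-the-shelf Hoeffding inequality would deliver only a Gaussian-type rate $\delta^{2}/\|k\|_\infty^{2}$, whereas the Bennett-type rate $\delta/\|k\|_\infty$ (for moderate $\delta$) implicit in $\log(1+\delta/(4\|k\|_\infty))$ is substantially sharper for small $\delta$; extracting it requires a Bennett or Bernstein-style MGF bound applied to $g_\delta(z)+\delta$, together with a careful choice of $\theta^{*}$ saturating $\mathbb{E}_\infty[\exp(\theta g_\delta(z))]\leq 1$. The delay bound is more routine: it reduces to a Wald identity plus an overshoot estimate, both tractable given the uniform boundedness of $g_\delta$.
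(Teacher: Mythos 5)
Your proposal follows essentially the same route as the paper: reduce via Theorem \ref{prop:kcuprop-abstract}, bound $\mathbb{P}_\infty(c_1<\infty)$ by a Chernoff/supermartingale argument (Lemma \ref{funlemma}) with a Bennett-type second-order expansion of the moment generating function of $g_\delta$ to extract the exponent $\frac{1}{4\|k\|_\infty}\log(1+\delta/(4\|k\|_\infty))$ (and your check via $y/(1+y)\leq\log(1+y)$ is exactly the verification that $M(\theta^*)\leq 1$ at that $\theta^*$), then apply Proposition \ref{lordencor} to the positive-drift walk for the delay. The one loose step is the delay constant: your Cauchy--Schwarz bound gives only $g_\delta(z)^{+}\leq g(z)^{+}\leq 4\|k\|_\infty$, hence $\mathbb{E}_1[(g_\delta^{+})^2]\leq 16\|k\|_\infty^2$ and a final term $32\|k\|_\infty^2/(d_k^2-\delta)^2$ rather than the stated $8\|k\|_\infty^2/(d_k^2-\delta)^2$; the asserted $C\leq 4\|k\|_\infty^2$ needs the sharper one-sided estimate $g(z)\leq k(x_0,x_1)+k(y_0,y_1)\leq 2\|k\|_\infty$ (valid for non-negative kernels such as the Gaussian, and the bound the paper itself invokes), which does not follow from what you wrote.
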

\begin{proof}
  To start, note that
  \begin{align}\label{inf-repr}
    \mathbb{P}_{\infty}(c_1 < \infty)
    &=
    \mathbb{P}_{\infty}
    \left(
    \inf\left\{
    k \geq 1\bigg| \sum\limits_{i=1}^{k}g_{\delta}(z_i) > h
    \right\} < \infty
    \right) \nonumber \\
    &=
    \mathbb{P}_{\infty}\left(
    \sup_{k\geq 1}\sum\limits_{i=1}^{k}g_{\delta}(z_i) > h
    \right).
  \end{align}

  To upper-bound the last term in this equation,  we will apply Lemma \ref{funlemma}.
  Note that  under our assumption that the kernel is bounded, the moment generating function
  $M(r) = \mathbb{E}_{\infty}[ \exp( r g_{\delta}(z_1) )]$
  is guaranteed to be well defined for all $r\geq 0 $. Therefore
  \begin{equation}\label{what-we-see}
    \mathbb{P}_{\infty}\left(
    \sup_{k\geq 1}\sum\limits_{i=1}^{k}g_{\delta}(z_i) > h
    \right) \leq \exp(-rh),
  \end{equation}
  where $r$ is any number satisfying $r>0$ and $M(r) \leq 1$.
  To identify such an $r$, start with a second order expansion of $M$:
  \begin{align*}
    M(r)
    =
    1
    -
    r\delta
    +
    \int_{0}^{r}
    \int_{0}^{\lambda}
    \mathbb{E}_{\infty}[\exp(u g_{\delta}(z))g_{\delta}(z)^2]\,\mathrm{d} u \, \mathrm{d}\lambda.
  \end{align*}
  Under the assumption that
  $\delta < 2\|k\|_{\infty}$ it holds that
  $|g_{\delta}(z)| \leq 4\|k\|_{\infty}$ and
  \begin{align*}
    &M(r)
    \leq
    1 - r\delta + 16\|k\|_{\infty}^2
    \int_{0}^{r}\int_{0}^{\lambda}\exp(u 4\|k\|_{\infty}) \,\mathrm{d}u \,\mathrm{d}\lambda \\
    &=
    1
    - r\delta
    + 16\|k\|_{\infty}^2
    \int_{0}^{r}\frac{1}{4\|k\|_{\infty}}\left(\exp(\lambda  4\|k\|_{\infty}) - 1\right)
    \,\mathrm{d}\lambda \\
        &=
    1
    - r\delta
    + 4\|k\|_{\infty}
    \int_{0}^{r}
    \left(\exp(\lambda  4\|k\|_{\infty}) - 1\right)
    \,\mathrm{d}\lambda.
    \end{align*}
  Minimizing the right hand side of the final equation above with respect to $r$ yields
  $$
  r
  =
  \frac{1}{4\|k\|_{\infty}}
  \log\left(
  1+ \frac{\delta}{4\|k\|_{\infty}}
    \right).
  $$
Combining \eqref{kcuttfa-abstract}, \eqref{inf-repr}  with \eqref{what-we-see} and using this definition of $r$ yields the claim \eqref{kcuttfa}.

For the delay, note that
$\mathbb{E}_1[c_1]$ is the expected amount of time until  a random walk with positive drift crosses an upper boundary.
Hence we may apply Proposition \ref{lordencor}. This leads to
    \begin{equation}\label{leader}
    \begin{split}
      \mathbb{E}_1[c_1] 
      &\leq
      \frac{h}{\mathbb{E}_1[g_{\delta}(z_1)]}
      +
      \frac{\mathbb{E}_1[(g_{\delta}(z_1)^{+})^2]}{\mathbb{E}_1[g_{\delta}(z_1)]^{2}} \\
      &=
      \frac{h}{d_{k}(p_0,p_1)^2-\delta}+
      \frac{\mathbb{E}_1[(g_{\delta}(z_1)^{+})^2]}{(d_k(p_0,p_1)^2-\delta)^{2}}.
    \end{split}
    \end{equation}

    Combining \eqref{kcuwcd-abstract},  \eqref{leader}, and the bound  
    $g_{\delta}(z)^{+} \leq 2\|k\|_{\infty}$, 
we obtain the relation  \eqref{kcuwcd}.

  \end{proof}
 Figure \ref{kcuvis} shows the logarithmic relation between false alarm time and delay specified by the theorem. For each level of false alarm $x \in \{1,2,\hdots, 10^4\}$, we computed the smallest value of $h$ guaranteed to achieve false alarm rate $x$ according to Equation \eqref{kcuttfa}, and plug in this threshold to compute delay according to Equation \eqref{kcuwcd}.
 The computations were performed for a hypothetical problem where
 $d_k(p_0,p_1)^2 = 1/6, \delta = 2^{-5}$ and $\|k\|_{\infty}=0.5$.
 
 \section{Empirical Results}\label{sect-emp}
  In this section we evaluate the KCUSUM on several change detection tasks.
  In each case, the observations consisted of vectors in $\reals^4$, and the Gaussian kernel (defined in Equation \ref{eqn:gauss-kern}) was used  with $\sigma^2=1$.
The pre-change distribution in each task is the normal distribution on $\mathbb{R}^4$ with mean zero and a covariance matrix equal to the identity scaled by a factor of
$\frac{1}{2}$.  The four possible post-change distributions were as follows:
\begin{enumerate}
\item Change in mean:  A normal distribution with mean $(1,1,1,1)$ and the pre-change covariance matrix.
\item Change in variance (all components): A normal distribution with mean $0$ and a covariance matrix equal to the identity scaled by a factor of $2$.
\item Change in variance (random component): The distribution obtained by sampling from the pre-change distribution and scaling a random component by a factor of $2$.
  \item Change to uniform:  The distribution on $\reals^4$ where each component is sampled independently from the uniform distribution on
$[
  -1/(2\sqrt{3}),
  1/(2\sqrt{3})
]$
\end{enumerate}
  \begin{figure}[]
   \centering
   
\includegraphics[width=1\linewidth]{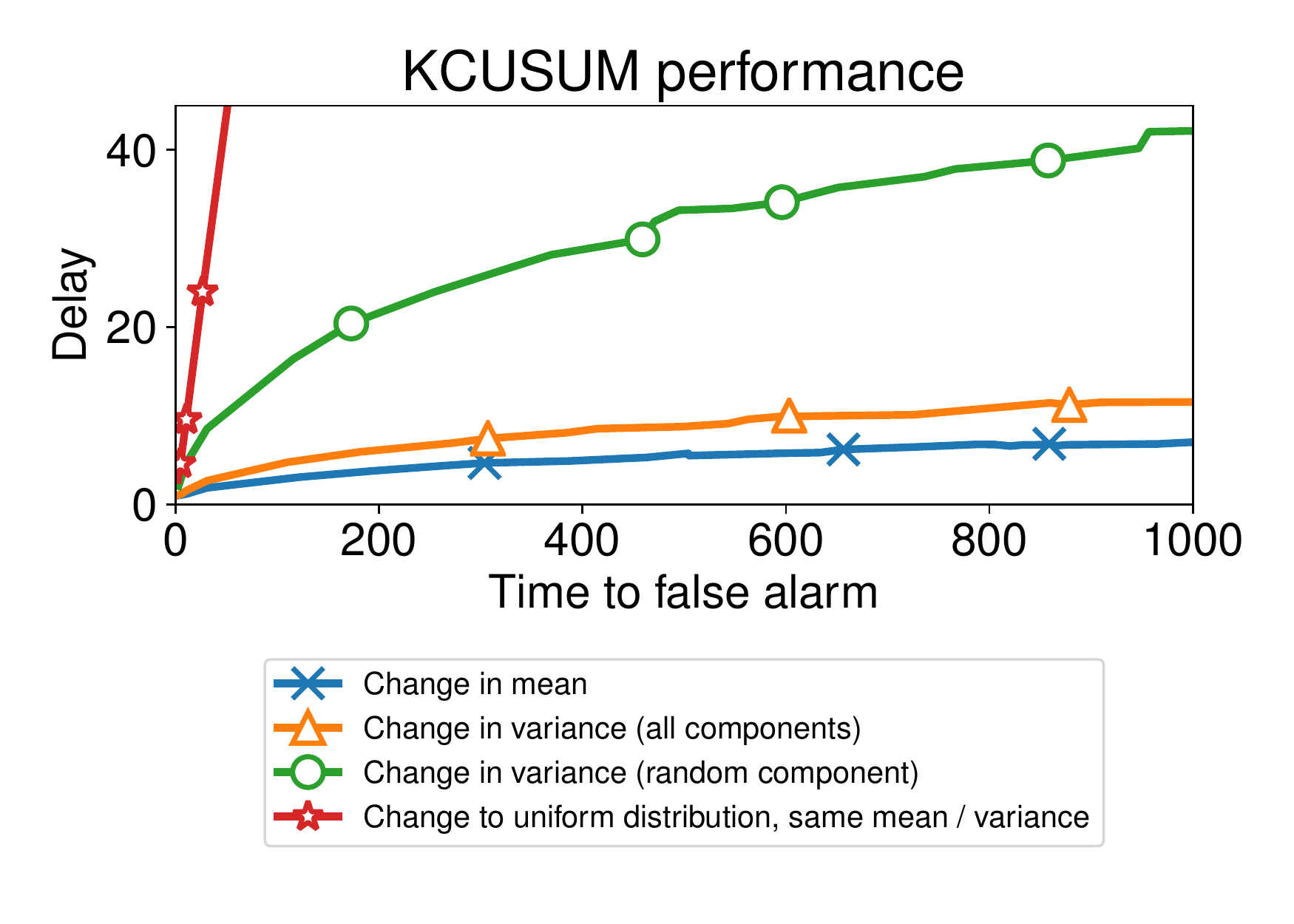}
\caption{ The performance of the KCUSUM on several change detection tasks. See text for details.\label{kcuemp}}
\end{figure}
  Note that the interval in Problem 4 was chosen so that the resulting distribution has the same mean and variance as the pre-change distribution.
  
For each task we used a Monte Carlo approach to estimate the time to false alarm and delay. The time to false alarm was estimated by generating $n=5000$ sequences with no change and running the KCUSUM until a false alarm was detected. We record the time where the false alarm occurs and average theses values to get the estimate for ARL2FA.
For the delay, we  generated $n=5000$ sequences that had a change at time $t=1$, and recorded the amount of time until the alarm goes off as an estimate of the delay. 
In the examples, we set $\delta = 2^{-7}$ in tasks (1) - (3),  and set it $\delta=2^{-9}$ in task (4).

Based on the results in Figure \ref{kcuemp} we see that the change detection tasks increase in difficulty as we go from problem 1 to 4. In all of the problems we observed a similar logarithmic growth rate in the the delay as the time to false alarm is allowed to increase. However, the scale of this growth can vary dramatically. For instance, the first two problems are relatively easy, while the fourth problem seems to be quite difficult for the KCUSUM. Note that in each case, the only data used by the algorithm is the incoming observations and samples from the pre-change distribution, and no information or samples about the post-change distribution are used. Overall, the results suggest that the KCUSUM may be a promising approach for change detection problems where less is known about the type of change.
\section{Conclusion}
This work introduced the Kernel Cumulative Sum Algorithm (KCUSUM), a new approach for online change detection. Unlike the CUSUM algorithm,  this approach does not require knowledge of the probability density ratio for its implementation. Instead, it uses incoming observations and  samples from the pre-change distribution. The result is that the same algorithm works for detecting many types of changes.
 Our theoretical analysis  establishes  the algorithm's ability to detect changes, and shows a relation between the delay and the MMD distance of the two distributions. These bounds should also be useful in the analysis of other non-parametric change detectors. Finally, we would like to suggest two avenues for future work. First, there are likely variants of KCUSUM that leverage more complex, non i.i.d. statistics, that may lead to improved detection performance. Secondly, the CUSUM has been investigated for detecting changes in scenarios with more complex dependencies among observations \cite{fuh2003sprt} and it may also be possible to  extend the kernel methods developed in this paper to detect changes in these cases.

\appendix
\section{Appendix}
 \setcounter{thm}{0}
 \renewcommand{\thethm}{A.\arabic{thm}}
 \begin{prop}[Corollary 1, \citep{lorden1970excess}]\label{lordencor}
  Let $\{a_i\}_{i\geq 1}$ be i.i.d. real-valued random variables such that $\mathbb{E}[a_i] = \mu >0$ and
  $\mathbb{E}[(a_i^+)^2] < \infty$.
  Define $S_n =\sum_{i=1}^{n}a_i$ and for $a\leq 0 \leq b$ let $T$ be the stopping time
  $T = \inf\{ n \geq 1\mid S_n \notin [a,b] \}$. Then
  $$\mathbb{E}[T] \leq \frac{(1-\alpha)b + \alpha a}{\mu} + \frac{\mathbb{E}[(a_1^+)^2]}{\mu^2}$$
  where
  $\alpha = \mathbb{P}(S_T < a)$.
\end{prop}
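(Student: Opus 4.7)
The plan is to prove this as a consequence of Wald's identity combined with a bound on the expected overshoot of the random walk above the upper boundary $b$. First I would establish integrability: since $\mathbb{E}[a_i] = \mu > 0$, the strong law of large numbers gives $S_n \to +\infty$ almost surely, so $S_n$ eventually exceeds $b$ and $T < \infty$ a.s.; finiteness of $\mathbb{E}[T]$ itself follows by dominating $T$ by the first passage time $T' = \inf\{n : S_n > b\}$ and using a standard renewal-theoretic estimate that relies on $\mathbb{E}[(a_1^+)^2] < \infty$. With $\mathbb{E}[T] < \infty$, Wald's identity yields $\mathbb{E}[S_T] = \mu\,\mathbb{E}[T]$, so the task reduces to bounding $\mathbb{E}[S_T]$.

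Next I would decompose the expectation according to which boundary is crossed. Since the increments have finite absolute first moment, we can split
\begin{equation*}
\mathbb{E}[S_T] = \mathbb{E}[S_T \mathbf{1}_{\{S_T > b\}}] + \mathbb{E}[S_T \mathbf{1}_{\{S_T < a\}}].
\end{equation*}
On the event $\{S_T < a\}$, the integrand is bounded above by $a$, so $\mathbb{E}[S_T \mathbf{1}_{\{S_T < a\}}] \leq a\,\alpha$. On $\{S_T > b\}$, write $S_T = b + (S_T - b)$ to obtain $\mathbb{E}[S_T \mathbf{1}_{\{S_T > b\}}] \leq (1-\alpha)\,b + \mathbb{E}[(S_T - b)^{+}]$. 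Dividing by $\mu$, we arrive at the intermediate bound
\begin{equation*}
\mathbb{E}[T] \leq \frac{(1-\alpha)\,b + \alpha\,a}{\mu} + \frac{\mathbb{E}[(S_T - b)^{+}]}{\mu}.
\end{equation*}

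The remaining and most substantive step is to control the expected overshoot above the upper barrier by $\mathbb{E}[(S_T - b)^{+}] \leq \mathbb{E}[(a_1^{+})^{2}]/\mu$. This is Lorden's overshoot inequality, the main theorem of \cite{lorden1970excess}, of which the present statement is advertised as Corollary 1; its proof proceeds by comparing the overshoot of $S_n$ at the first crossing of $b$ to the stationary excess distribution of the ascending ladder height process, and then using the fact that the mean ladder height is at least $\mu$ to bound the first two moments. I would invoke this inequality as a black box. Substituting the resulting bound into the previous display yields the claim. The main obstacle is, unsurprisingly, the overshoot estimate itself: everything else is elementary conditioning plus Wald's identity, whereas the overshoot bound is the genuinely nontrivial ingredient that makes \cite{lorden1970excess} a classical reference in sequential analysis.
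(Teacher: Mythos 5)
The paper does not prove this proposition at all: it is imported verbatim as Corollary 1 of \cite{lorden1970excess}, so the only meaningful comparison is against Lorden's original argument, and your proposal reproduces it essentially exactly — Wald's identity, the split of $\mathbb{E}[S_T]$ according to which boundary is crossed (using that $T$ coincides with the one-sided passage time $\tau_b$ on the event $\{S_T>b\}$), and the overshoot bound $\mathbb{E}[(S_{\tau_b}-b)^+]\leq \mathbb{E}[(a_1^+)^2]/\mu$ as the one nontrivial ingredient. Your sketch is correct; the only quibble is that finiteness of $\mathbb{E}[T]$ needs only $\mu>0$ (via the ladder-epoch argument for positive-drift walks), not the second-moment condition, which is needed solely for the overshoot estimate.
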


\begin{lem}\label{funlemma}
  Let
  $\{a_i\}_{i\geq 1}$
  be a sequence of i.i.d. real-valued random variables and for $n\geq 1$ define
   the partial sums
   $S_n = \sum_{i=1}^{n}a_i$.
   Let 
   $M(r) = \mathbb{E}\left[\exp( r a_1 )\right]$
   be the moment generating function of the $a_i$
   and suppose there is a  $q>0$  such that $M(q) \leq 1$.
   Then for any $h\geq 0$,
  $$
  \mathbb{P}\left( \sup_{n\geq 1}S_n > h\right) \leq \exp(-q h).
  $$
\end{lem}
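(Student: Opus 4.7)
The plan is to recognize the quantity $\exp(q S_n)$ as a non-negative supermartingale and then invoke Doob's maximal inequality. Specifically, let $\mathcal{F}_n = \sigma(a_1,\ldots,a_n)$ and define $W_0 = 1$ and $W_n = \exp(q S_n)$ for $n \geq 1$. Using independence of the $a_i$ together with $M(q) \leq 1$, one checks that
\begin{equation*}
\mathbb{E}[W_{n+1} \mid \mathcal{F}_n] = W_n \, \mathbb{E}[\exp(q a_{n+1})] = W_n M(q) \leq W_n,
\end{equation*}
so $\{W_n\}_{n\geq 0}$ is a non-negative supermartingale with $\mathbb{E}[W_0]=1$.

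Next I would translate the event of interest into an event about $W_n$. Because $q>0$ and the exponential is strictly increasing, the event $\{\sup_{n\geq 1} S_n > h\}$ is contained in $\{\sup_{n\geq 1} W_n \geq \exp(qh)\}$. Applying Doob's maximal inequality for non-negative supermartingales on the finite horizon $1\leq n\leq N$ gives $\mathbb{P}(\max_{1\leq n\leq N} W_n \geq \exp(qh)) \leq \mathbb{E}[W_0]\exp(-qh) = \exp(-qh)$. Letting $N \to \infty$ and invoking monotone continuity of probability on the increasing sequence of events yields $\mathbb{P}(\sup_{n\geq 1} W_n \geq \exp(qh)) \leq \exp(-qh)$, and hence the desired bound $\mathbb{P}(\sup_{n\geq 1} S_n > h) \leq \exp(-qh)$.

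There is no substantive obstacle here; the only minor points to be careful about are (i) using the supermartingale version of Doob's inequality rather than the martingale/submartingale version (this is needed because we only assumed $M(q)\leq 1$, not equality), and (ii) passing from the finite-horizon maximum to the infinite supremum, which is handled by monotone convergence for the nested events $\{\max_{1\leq n\leq N} W_n \geq \exp(qh)\}$. No integrability beyond $M(q)\leq 1$ is needed, and the hypothesis $h\geq 0$ is used only to keep the bound $\exp(-qh)\leq 1$ non-trivial.
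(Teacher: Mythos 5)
Your proposal is correct and follows essentially the same route as the paper's proof: exponentiate the random walk to get a non-negative supermartingale and apply the supermartingale maximal inequality. The only cosmetic difference is that you anchor the bound at $\mathbb{E}[W_0]=1$ while the paper uses $\mathbb{E}[\exp(qa_1)]=M(q)\leq 1$; both yield the same conclusion.
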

\begin{proof}
  For $n\geq 1$ define $Z_n = \exp(q S_n)$. Then for $n\geq 1$,
      \begin{align*}
      \mathbb{E}[Z_{n+1} \mid Z_n]
      &= \mathbb{E}\left[\prod_{i=1}^{n+1}\exp(q a_i) \, \Big| \,  Z_n\right] \\
      &= Z_n \mathbb{E}\left[\exp(q a_{n+1})\right] \\
      &= Z_n M(q)
      \leq Z_n.
      \end{align*}
      Furthermore,  for all
      $n\geq 1$
      it holds that
      $Z_n \geq 0$ and
      $\mathbb{E}[|Z_n|] = M(q)^n \leq 1$.
      Therefore
      $Z_n$ is a non-negative supermartingale. Hence
      \begin{align*}
        \mathbb{P}\left(\sup_{n\geq 1}S_n > h\right)
        &=
        \mathbb{P}\left(\sup_{n\geq 1}Z_n > \exp(qh) \right)
        \\
        &\leq \mathbb{E}[\exp( q a_1)]\exp(- qh) \\
        &\leq \exp(- q h).
      \end{align*}
      The first step in the above derivation follows from the monotonicity of the function
      $x \mapsto \exp(x)$,
       the second step follows from Theorem 7.8 in \citep{gallager1995discrete}, and the final step follows from our assumption on $q$.
\end{proof}

\subsubsection*{Proof of Proposition \ref{cusum-prop}}
  Define  $S_1,S_2,\hdots$ as
  $$S_n = \sum\limits_{i=1}^{n}\log\frac{f_1(x_i)}{f_0(x_i)}.$$
  and let $\alpha = \mathbb{P}_{\infty}( \sup_{n\geq 1}S_n > h )$.
It follows from Theorem 2 of \cite{lorden1971} that the CUSUM obeys
$$\operatorname{ARL2FA}_{\mathrm{CUSUM}} \geq \frac{1}{\alpha}.$$
Note that
$\mathbb{E}_{\infty}[\log \frac{f_1(x_i)}{f_0(x_i)}] = -d_{KL}(p_0,p_1) < 0$.
Hence $S_n$ is a random walk with negative drift.
Furthermore, the moment generating function of the increments under $\mathbb{P}_{\infty}$ can be expressed as 
$M(r) =
\mathbb{E}_{\infty}\left[
  \exp\left(
  r\log \frac{f_1(x_1)}{f_0(x_1)}
  \right)
  \right] =
\mathbb{E}_{\infty}
\left[
  \left(\frac{f_1(x_1)}{f_0(x_1)}\right)^{r}
\right]$
and it is evident that $g(1) = \mathbb{E}_{1}[1] = 1$. Then
we may apply Lemma  \ref{funlemma} with $q=1$ to conclude that $\alpha \leq \exp(-h)$. This establishes our claim on  the time to false alarm.

Let $N$ be the stopping time
$N = \inf\{ n \geq 1 \mid S_n > h\}$.
Again applying Theorem 2 from \cite{lorden1971}, it holds that
$$\operatorname{ESADD}_{\mathrm{CUSUM}} \leq \mathbb{E}_{1}[N].$$
Under the assumption of a change point at $t=1$, the variables
$\log \frac{f_1(x_i)}{f_0(x_i)}; i=1,2,\hdots$
are i.i.d with positive mean, and therefore $\{S_n\}_{n\geq 1}$ is a random walk with positive drift
$\mu = \mathbb{E}_{1}[\log \frac{f_1(x_1)}{f_0(x_1)}]$.
The bound on ESADD then follows from applying Proposition \ref{lordencor} to the stopping time $N$.

\hfill\ensuremath{\blacksquare}
\bibliographystyle{plain}
\bibliography{super.bib}

\begin{thebibliography}{10}

\bibitem{anderson1962}
T.~W. Anderson.
\newblock On the distribution of the two-sample cramer-von mises criterion.
\newblock {\em Ann. Math. Statist.}, 33(3):1148--1159, 09 1962.

\bibitem{Basseville1993}
Mich\`{e}le Basseville and Igor~V. Nikiforov.
\newblock {\em Detection of Abrupt Changes: Theory and Application}.
\newblock Prentice-Hall, Inc., Upper Saddle River, NJ, USA, 1993.

\bibitem{brodsky1993nonparametric}
E.~Brodsky and B.S. Darkhovsky.
\newblock {\em Nonparametric Methods in Change Point Problems}.
\newblock Mathematics and Its Applications. Springer Netherlands, 1993.

\bibitem{feller1948kolmogorov}
W~Feller et~al.
\newblock On the kolmogorov-smirnov limit theorems for empirical distributions.
\newblock {\em The Annals of Mathematical Statistics}, 19(2):177--189, 1948.

\bibitem{fuh2003sprt}
Cheng-Der Fuh et~al.
\newblock Sprt and cusum in hidden markov models.
\newblock {\em The Annals of Statistics}, 31(3):942--977, 2003.

\bibitem{gallager1995discrete}
R.G. Gallager.
\newblock {\em Discrete Stochastic Processes}.
\newblock The Springer International Series in Engineering and Computer
  Science. Springer US, 1995.

\bibitem{gartner2003survey}
Thomas G{\"a}rtner.
\newblock A survey of kernels for structured data.
\newblock {\em ACM SIGKDD Explorations Newsletter}, 5(1):49--58, 2003.

\bibitem{gretton2012kernel}
Arthur Gretton, Karsten~M Borgwardt, Malte~J Rasch, Bernhard Sch{\"o}lkopf, and
  Alexander Smola.
\newblock A kernel two-sample test.
\newblock {\em Journal of Machine Learning Research}, 13(Mar):723--773, 2012.

\bibitem{harchaoui2009kernel}
Zaid Harchaoui, Eric Moulines, and Francis~R Bach.
\newblock Kernel change-point analysis.
\newblock In {\em Advances in neural information processing systems}, pages
  609--616, 2009.

\bibitem{mstatistic}
Shuang Li, Yao Xie, Hanjun Dai, and Le~Song.
\newblock M-statistic for kernel change-point detection.
\newblock In C.~Cortes, N.~D. Lawrence, D.~D. Lee, M.~Sugiyama, and R.~Garnett,
  editors, {\em Advances in Neural Information Processing Systems 28}, pages
  3366--3374. Curran Associates, Inc., 2015.

\bibitem{lorden1971}
G.~Lorden.
\newblock Procedures for reacting to a change in distribution.
\newblock {\em Ann. Math. Statist.}, 42(6):1897--1908, 12 1971.

\bibitem{lorden1970excess}
Gary Lorden.
\newblock On excess over the boundary.
\newblock {\em The Annals of Mathematical Statistics}, pages 520--527, 1970.

\bibitem{moustakidesnumerical}
George Moustakides, Aleksey Polunchenko, and Alexander Tartakovsky.
\newblock Numerical comparison of cusum and shiryaev–roberts procedures for
  detecting changes in distributions.
\newblock {\em Communications in Statistics - Theory and Methods},
  38(16-17):3225--3239, 2009.

\bibitem{moustakides1986optimal}
George~V Moustakides.
\newblock Optimal stopping times for detecting changes in distributions.
\newblock {\em The Annals of Statistics}, pages 1379--1387, 1986.

\bibitem{meanembeddingreview}
Krikamol Muandet, Kenji Fukumizu, Bharath Sriperumbudur, and Bernhard
  Schölkopf.
\newblock Kernel mean embedding of distributions: A review and beyond.
\newblock {\em Foundations and Trends® in Machine Learning}, 10(1-2):1--141,
  2017.

\bibitem{page1954continuous}
Ewan~S Page.
\newblock Continuous inspection schemes.
\newblock {\em Biometrika}, 41(1/2):100--115, 1954.

\bibitem{adarling}
A.~N. Pettitt.
\newblock A two-sample anderson-darling rank statistic.
\newblock {\em Biometrika}, 63(1):161--168, 1976.

\bibitem{shiropt}
Aleksey~S. Polunchenko and Alexander~G. Tartakovsky.
\newblock On optimality of the shiryaev-roberts procedure for detecting a
  change in distribution.
\newblock {\em The Annals of Statistics}, 38(6):3445--3457, 2010.

\bibitem{olympiabook}
H.~Vincent Poor and Olympia Hadjiliadis.
\newblock {\em Quickest Detection}.
\newblock Cambridge University Press, 2008.

\bibitem{shewhart1931economic}
W.A. Shewhart.
\newblock {\em Economic control of quality of manufactured product}.
\newblock Bell Telephone Laboratories series. D. Van Nostrand Company, Inc.,
  1931.

\bibitem{vishwanathan2010graph}
S~Vichy~N Vishwanathan, Nicol~N Schraudolph, Risi Kondor, and Karsten~M
  Borgwardt.
\newblock Graph kernels.
\newblock {\em Journal of Machine Learning Research}, 11(Apr):1201--1242, 2010.

\bibitem{wald1947sequential}
A~Wald.
\newblock {\em Sequential Analysis}.
\newblock Wiley, 1947.

\end{thebibliography}
\end{document}